\numberwithin{equation}{section}
\theoremstyle{plain}
\newtheorem{Th}{Theorem}[section]
\newtheorem{Lemma}[Th]{Lemma}
\newtheorem{Cor}[Th]{Corollary}
\newtheorem{Prop}[Th]{Proposition}
\theoremstyle{definition}
\newtheorem{Def}[Th]{Definition}
\newtheorem{Conj}[Th]{Conjecture}
\newtheorem{Rem}[Th]{Remark}
\newtheorem{?}[Th]{Problem}
\newtheorem{Ex}[Th]{Example}
\begin{document}

\title[The rotor-routing torsor and the Bernardi torsor]{The rotor-routing torsor and the Bernardi torsor disagree for every non-planar ribbon graph}

\author[Changxin Ding]{Changxin Ding}

\address{Brandeis University \\ Department of Mathematics \\
Waltham MA 02453} 

\email{dcx@brandeis.edu}



\begin{abstract}
Let $G$ be a ribbon graph. Matthew Baker and Yao Wang proved that the rotor-routing torsor and the Bernardi torsor for $G$, which are two torsor structures on the set of spanning trees for the Picard group of $G$, coincide when $G$ is planar. We prove the conjecture raised by them that the two torsors disagree when $G$ is non-planar.  

\end{abstract}

\maketitle

\section{Introduction} 
This paper is aimed at completing the proof of the following conjecture proposed by Matthew Baker and Yao Wang in \cite{BW}. 

\begin{Conj}\label{Conj} Let $G$ be a connected ribbon graph without loops or multiple edges. The Bernardi and rotor-routing torsors $b_v$\footnote{In \cite{BW}, the Bernardi torsor is denoted by $\beta_v$. However, the closely related Bernardi bijection from spanning trees to break divisors is denoted by $\beta$ or $\beta_{(v,e)}$. So we change the notation here to avoid ambiguity. } and $r_v$ agree for all vertices $v$ if and only if $G$ is planar. 
\end{Conj}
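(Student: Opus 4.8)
The statement has two directions. That planarity forces the two torsors to agree is exactly the theorem of Baker and Wang, so the content to be supplied is the converse: if the ribbon graph $G$ (equivalently, its rotation system) has positive genus, then $b_v$ and $r_v$ must disagree for at least one vertex $v$. Since each torsor is determined by where a single fixed spanning tree is sent under each element of $\mathrm{Pic}^0(G)$, it suffices to exhibit, for every non-planar $G$, a single triple consisting of a base vertex $v$, a spanning tree $T$, and a group element $[D]\in\mathrm{Pic}^0(G)$ on which the two actions differ. The plan is therefore to localize the non-planarity to a bounded piece of $G$, reduce the verification to that piece, and then check the finitely many pieces directly.

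First I would set up a reduction along graph minors. Non-planarity of a ribbon graph is a minor-monotone phenomenon: deleting or contracting an edge cannot increase the genus of the rotation system, so a non-planar $G$ contains a minor $H$ that is non-planar but becomes planar after deleting or contracting any single edge. I would then prove that both torsor structures are natural with respect to deletion and contraction, carrying the induced ribbon structure on the minor, in the strong sense that a witnessed disagreement on $H$ can be pulled back to a witnessed disagreement on $G$. On the rotor-routing side this compatibility is close to known. On the Bernardi side it requires understanding how the Bernardi tour on $G$ restricts to the tour on $H$ once an edge is deleted or contracted, so that a group element and spanning tree realizing disagreement downstairs lift to ones realizing disagreement upstairs.

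Next I would classify the minor-minimal non-planar ribbon graphs that can arise, subject to the hypothesis that $G$ has no loops or multiple edges, and verify disagreement for each. The relevant base cases are the genus-one rotation systems on small simple graphs, including the toroidal embeddings of $K_4$ and the positive-genus ribbon structures on the Kuratowski graphs $K_5$ and $K_{3,3}$; for each such finite ribbon graph one computes $b_v$ and $r_v$ on a suitable generator of $\mathrm{Pic}^0(G)$ and exhibits a spanning tree sent to two different trees. Because the group and the set of spanning trees are small in each base case, these are explicit finite checks once the reduction has pinned down the list.

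The main obstacle will be the middle step: proving that the Bernardi torsor is compatible with edge deletion and contraction tightly enough to transport a disagreement from the minor $H$ up to $G$. The rotor-routing action has a clean local description that behaves predictably under minors, but the Bernardi action is defined through a global tour of the ribbon structure, and deleting or contracting an edge reroutes this tour; controlling how the two actions drift apart under such a reroute, uniformly enough that a single surviving witness is guaranteed, is the delicate heart of the argument. If a clean minor-functoriality statement proves too lossy, the fallback is to replace the reduction by a direct structural argument showing that any positive-genus rotation system forces a local inconsistency between the rotor mechanism and the Bernardi tour, and to measure this inconsistency by a genus-detecting invariant that vanishes precisely in the planar case.
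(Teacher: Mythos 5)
There is a genuine gap, and it is fatal to the reduction as you set it up: minor-minimal non-planar ribbon graphs are not the Kuratowski-type simple graphs you list, and on the true minimal ones the statement you are trying to verify is \emph{false}. Contracting edges creates loops and multiple edges, and deleting edges only lowers genus, so the minor-minimal positive-genus rotation systems are tiny multigraphs --- for instance a single vertex with two interleaved loops, or the type II configuration with $k=n=1$ (two pairs of parallel edges interleaved at a vertex). The paper's closing Remark records precisely that loops are invisible to both torsors and that for the type II graph with $k=n=1$ the two torsors \emph{agree} at every vertex despite the graph being non-planar. So your induction bottoms out at base cases where there is no disagreement to transport upward, and no functoriality statement, however strong, can rescue it. This is exactly why the hypothesis ``no loops or multiple edges'' appears in the theorem: the property being proved is not minor-monotone, so a minor-based reduction cannot work without substantial reformulation. (The middle step you flag --- compatibility of the Bernardi torsor with deletion/contraction --- is also unestablished and genuinely hard, since contraction does not even induce an obvious map on spanning trees respecting both actions; but the base-case failure already sinks the scheme.)

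The paper's actual route avoids minors entirely and works with \emph{subgraphs} and a cut-vertex decomposition, which is the structural idea your fallback paragraph gestures at. By a lemma of Yuen, every non-planar ribbon graph contains a subgraph of one of two interleaving patterns (type I or type II); an iterative rerouting argument upgrades this to a decomposition $G = G_1 \vee_c G_2$ (wedge at a single vertex $c$) in which the non-planar pattern sits inside one part with controlled cyclic order at $c$ (types A and B). Since $\mathrm{Pic}^0(G) \cong \mathrm{Pic}^0(G_1) \oplus \mathrm{Pic}^0(G_2)$ for a wedge, one can run the rotor-routing process and the Bernardi tour explicitly on a hand-picked spanning tree and read off the discrepancy in one summand, showing it is nonzero via a lemma stating that an acyclic partial orientation with principal boundary divisor must be a disjoint union of directed cuts. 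The advantage of this approach is that every computation happens inside $G$ itself with its given simple-graph hypothesis intact, so the loop/multi-edge degeneracy that destroys the minor reduction never arises; the price is the case analysis needed to normalize the subgraph into type A or B, which is where most of the paper's technical work lives.
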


The "if" part of the conjecture has been proved in \cite{BW} and we will show the "only if" part is also true:

\begin{Th} \label{main} Let $G$ be a connected non-planar ribbon graph without loops or multiple edges. The Bernardi and rotor-routing torsors $b_v$ and $r_v$ do not agree for some vertex $v$ of $G$. 
\end{Th}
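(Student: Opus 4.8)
The plan is to replace the comparison of two full torsor structures by a single explicit computation. Since $b_v$ and $r_v$ are both simply transitive $\operatorname{Pic}^0(G)$-actions on the set of spanning trees, they coincide if and only if they coincide at one fixed base tree $T_0$: if $[D]\cdot_b T_0=[D]\cdot_r T_0$ for all $[D]\in\operatorname{Pic}^0(G)$, a short torsor argument (transport the equality along the two actions) propagates it to every tree. As $\operatorname{Pic}^0(G)$ is generated by the classes $[w]-[v]$, it then suffices to understand the action of these generators, and here both torsors are local: rotor-routing sends $T_0$ to the tree obtained by routing a chip from $w$ to $v$ through the rotor configuration read off from $T_0$ and the ribbon structure, while the Bernardi action is read off from the Bernardi tour based at a chosen incidence $(v,e)$. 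Thus to prove disagreement it is enough to produce one vertex $v$, one base tree $T_0$, and one generator $[w]-[v]$ whose two images differ.

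To organize the search for such a witness I would first reduce to minimal non-planar ribbon graphs. Deleting an edge of a ribbon graph, or contracting a non-loop edge, sends a sphere embedding to a sphere embedding, so every ribbon-minor of a planar ribbon graph is planar; contrapositively, a non-planar ribbon graph $G$ has a minor-minimal non-planar ribbon minor $H$. Note that $H$ may acquire loops or parallel edges even though $G$ has none, but this is harmless, since the disagreement I ultimately want is a statement about $H$. The heart of this step is a \emph{descent lemma}: if the two torsors disagree on $H$, then they disagree on $G$; equivalently, agreement of $b_v$ and $r_v$ for all $v$ must pass to every ribbon-minor.

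Granting the descent lemma, it remains to verify disagreement on the minor-minimal non-planar ribbon graphs. A Kuratowski-type analysis should cut these down to a short, explicit family of genus-one structures, beginning with the one-vertex two-loop torus ribbon graph with interleaved rotation $a\,b\,a\,b$ and the genus-one ribbon structures on $K_5$ and $K_{3,3}$. For each of these I would fix an explicit $T_0$ and generator $[w]-[v]$ and compute $([w]-[v])\cdot_b T_0$ and $([w]-[v])\cdot_r T_0$ by hand, exhibiting two distinct trees. With a convenient choice of $T_0$ these computations are finite and short.

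I expect the descent lemma to be the main obstacle. Rotor-routing is manifestly compatible with edge deletion and contraction, being defined by a purely local chip-firing process; but the Bernardi action is defined through a tour of the \emph{entire} ribbon graph, so relating the Bernardi torsor of $G$ to those of $G\setminus e$ and $G/e$ requires careful bookkeeping of how the tour is cut and spliced when $e$ is removed or collapsed, and of how the base incidence and the cyclic orders transform. Establishing this compatibility --- together with the classification of the minimal obstructions --- is where the real difficulty lies; the base-case computations, while tedious, are routine once it is in place. An alternative route that sidesteps the base cases is to combine the local descriptions of the first paragraph with the known characterization of Chan--Church--Grochow, that the rotor-routing torsors $r_v$ are independent of $v$ exactly in the planar case, by showing that $b_v=r_v$ for all $v$ would force that independence; the difficulty then migrates to controlling the base-vertex dependence of the Bernardi torsor, which is again governed by the global structure of the tour.
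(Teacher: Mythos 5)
There is a genuine gap, and it is fatal rather than merely technical: your base cases are false, so no descent lemma can rescue the plan. Under ribbon minors, the minor-minimal non-planar ribbon graphs necessarily have loops or multiple edges (for instance, contracting a spanning tree of any non-planar ribbon graph produces a one-vertex bouquet with two interleaved loops), and on precisely such graphs the two torsors can \emph{agree} even though the graph is non-planar. Your first proposed base case, the one-vertex two-loop torus graph with rotation $abab$, has a single spanning tree and trivial $\operatorname{Pic}^0$, so $b_v=r_v$ holds vacuously; more generally, loops are invisible to both the rotor-routing and Bernardi processes. The paper's closing Remark exhibits exactly this phenomenon with multiple edges: the type II ribbon graph with $k=n=1$ (two digons interleaved at a vertex) is non-planar, yet the two torsors agree at every vertex. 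So the statement ``non-planar implies disagreement'' is simply false without the simplicity hypothesis, your aside that acquiring loops or parallel edges in the minimal minor ``is harmless'' is precisely where the argument breaks, and the descent strategy would be attempting to propagate a disagreement that does not exist on the minimal obstructions. Two smaller problems point the same way: $K_5$ and $K_{3,3}$ are obstructions to \emph{abstract} planarity, not ribbon planarity (a ribbon graph can be non-planar while its underlying graph is planar, as in the paper's Figure~\ref{fig1} example), so a Kuratowski-type classification is the wrong organizing tool; and your fallback route via Chan--Church--Grochow has no mechanism either, since $b_v=r_v$ for all $v$ does not in any evident way force base-point independence of either torsor.

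By contrast, the paper never passes to minors. It uses the classical fact that a non-planar ribbon graph contains two interleaved paths or cycles at a vertex (type I or II, Lemma~\ref{type12}), upgrades this to a wedge decomposition $G=G_1\vee_c G_2$ (types A and B, Lemmas~\ref{1toa} and~\ref{2tob}) in which the interleaving sits on a controlled side, and then, for an explicitly chosen spanning tree, computes the divisor $\beta_{(y,e)}(T')-\beta_{(y,e)}(T)$ by splicing the Bernardi tours and tracking the rotor-routing chip through the wedge point; non-equivalence is certified by the directed-cut criterion (Lemma~\ref{l3}) and, in type B, by Lemma~\ref{l5} --- the one step where the no-loops-or-multiple-edges hypothesis is invoked. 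That localization of the simplicity hypothesis is exactly the structural point your reduction discards.
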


What does the conjecture mean? Let us give a brief introduction here. One can also see the introduction in \cite{BW}. 

Let $G$ be a connected graph on $n$ vertices. The \emph{Picard group} $\text{Pic}^{0}(G)$ of $G$ (also called the sandpile group, Jacobian group, or critical group) is a discrete analogue of the Jacobian of a Riemann surface. The cardinality of $\text{Pic}^{0}(G)$ is the determinant of any $(n-1)\times(n-1)$ principal sub-minor of the Laplacian matrix of $G$ and hence equals the cardinality of the set $S(G)$ of spanning trees of $G$ by Kirchhoff's Matrix-Tree Theorem. 

It is natural to look for bijections between the group $\text{Pic}^{0}(G)$ and the set $S(G)$. However, since one of the objects is a group, people can ask for more than a bijection. Indeed, one can define a \emph{torsor} for a group $P$ to be a set $S$ together with a simply transitive action of $P$ on $S$. If the set $S(G)$ has a torsor structure for the group $\text{Pic}^{0}(G)$, then their cardinalities are equal. 

We are interested in two kinds of torsors. Both of them are defined for a \emph{ribbon graph} together with a basepoint vertex $v$. One can think of the ribbon graph as a graph drawn on a closed orientable surface and hence having a cyclic ordering of the edges around each vertex. 

Holroyd \textit{et al.} \cite{H} defined the \emph{rotor-routing torsor} $r_v$. Then Melody Chan, Thomas Church, and Joshua A. Grochow \cite{CCG} proved that the rotor-routing torsor is independent of the basepoint if and only if $G$ is a planar ribbon graph.

Matthew Baker and Yao Wang \cite{BW} observed that one could use the Bernardi bijection
in \cite{Bernardi} to define the \emph{Bernardi torsor} $b_v$ and proved that the Bernardi torsor is independent of the basepoint if and only if $G$ is a planar ribbon graph. Moreover, they proved that in the planar case $b_v$ and $r_v$ agree. Then they raised Conjecture~\ref{Conj}, which is the target of this paper.

This paper can be viewed as a complement to \cite{BW}. We adopt notation, terminology, and some useful lemmas from \cite{BW}, which are reviewed briefly in Section~\ref{Background}. In Section~\ref{main lemma}, we prove some technical lemmas, which categorize all the non-planar ribbon graphs into two types, called \emph{type A} and \emph{type B}. In either case, the graph is decomposed into two parts so that in Section~\ref{Proof} we can handle the computation of the rotor-routing and Bernardi process and hence prove Theorem~\ref{main}. 

In an independent work, Farbod Shokrieh and Cameron Wright also solved this conjecture \cite{ShW}.

\section{Background}\label{Background}

In this section we introduce notation and review briefly the rotor-routing torsor, the Bernardi torsor, and some useful lemmas from \cite{BW} . We refer to \cite{BW} for more details. 

For any positive integer $n$, we denote by $[n]$ the set $\{1,2,\cdots,n\}$.

Let $G$ be a graph, by which we mean a finite connected graph, possibly with loops and multiple edges. We use $V(G)$ to denote the vertex set of $G$, and $E(G)$ the edge set of $G$. Recall that a \emph{divisor} on $G$ is a formal sum of vertices with integer coefficients, written as $\sum_{v\in V(G)}a_v (v)$, where $a_v\in\mathbb{Z}$. The \emph{degree} of this divisor is $\sum_{v\in V(G)}a_v$, and the set of divisors of degree $d$ is denoted by $\text{Div}^d(G).$ Given an oriented edge $\overrightarrow{e}=\overrightarrow {uv}$, we denote by $\partial \overrightarrow{e}$ the divisor $(v)-(u)$. The group of \emph{principal divisors} on $G$, denoted by $\text{Prin}(G)$, is the subgroup of $\text{Div}^0(G)$ generated by $\{\sum_{\overrightarrow{uv} \text{ is incident to } v}\partial(\overrightarrow{uv}):v\in V(G)\}$. We say that two divisors $D$ and $D'$ are \emph{linearly equivalent}, written $D\sim D'$, if $D-D'\in\text{Prin}(G)$.  We denote the linear equivalence class of a divisor $D$ by $[D]$. The \emph{Picard group} of $G$ is defined to be $\text{Pic}^{0}(G)=\text{Div}^0(G)/\text{Prin}(G)$. 

To define the two torsors, the graph $G$ must be endowed with a \emph{ribbon structure}, meaning the edges around each vertex of $G$ have a cyclic ordering. We call a graph with such a structure a \emph{ribbon graph}. In this paper, we draw a ribbon graph on a plane (possibly with edges crossing) such that around each vertex the counterclockwise orientation indicates the cyclic ordering of the edges. For example, in Figure~\ref{fig1}, the cyclic ordering around the vertex $c$ is $(ca,cd,cf)$ and the cyclic ordering around the vertex $b$ is $(bf, ba, bd)$. A \emph{non-planar ribbon graph} is a ribbon graph that cannot be drawn on a plane with no crossings respecting the ribbon structure. In the example, by ignoring the ribbon structure one can draw the graph on a plane with no crossings, but as a ribbon graph it is non-planar. 

For the \emph{rotor-routing torsor}, we need to recall the meaning of the notation $T'=((x)-(y))_y(T)$, where $T,T'$ are spanning trees and $x,y$ are vertices.  The vertex $y$ is viewed as a fixed sink. The input is the spanning tree $T$. The output $T'=((x)-(y))_y(T)$ is determined by an algorithm called \emph{rotor-routing process}. Initially we orient the edges of $T$ towards $y$ and put a chip at $x$. Then in each step of the rotor-routing process, we (i) turn the rotor around the chip, meaning rotating the unique oriented edge whose tail locates the chip to the next one according to the ribbon structure, and (ii) move the chip along the newly oriented edge to its head. In each step, the set of the oriented edges is called a \emph{rotor configuration}. It is proved in \cite{H} that at the end of this process the chip reaches the sink $y$ and the rotor configuration forms a spanning tree, which is the output $T'$. This defines how an element $(x)-(y)$ of $\text{Div}^0(G)$ acts on the set $S(G)$ of the spanning trees of $G$. Because $\text{Div}^0(G)$ is freely generated by the divisors $\{(x)-(y):x\in V(G)\backslash\{y\}\}$, one can define the action of $\text{Div}^0(G)$ on $S(G)$ in a natural way. It is proved in \cite{H} that this action descends to a simply transitive action $r_y$ of $\text{Pic}^{0}(G)$ on $S(G)$, which is called the rotor-routing torsor.

In Figure~\ref{fig1}, let $T$ be the spanning tree $\{ca,cf,ab,bd\}$. If one puts the sink at $d$ and the chip at $c$, then in one step the chip will reach the sink $d$ and  hence get the spanning tree $T'=((c)-(d))_d(T)=\{cd,cf,ab,bd\}$. See also Figure~\ref{fig9-1} for a more complicated example. 

For the \emph{Bernardi torsor}, we need to recall the meaning of the notation $\beta_{(v,e)}(T)$, where $T$ is a spanning tree, $v$ is a vertex, $e$ is an edge incident to $v$, and $\beta_{(v,e)}$ is the \emph{Bernardi bijection} from $S(G)$ to the set of \emph{break divisors} $B(G)$. The break divisors are certain divisors of degree $g$ and have the property (cf. \cite[Theorem 4.21]{AKBS}) that each linear equivalence class of $\text{Div}^g(G)$ contains exactly one break divisor, where $g=\#E(G)-\#V(G)+1$ is the nullity\footnote{In \cite{BW}, $g$ is called the combinatorial genus of $G$.} of $G$. The map $\beta_{(v,e)}$ is induced by the \emph{Bernardi process}. Informally the Bernardi process uses a tour on the surface where the ribbon graph $G$ is embedded. The tour begins with $(v,e)$, goes along the edges in the spanning tree $T$, and cuts through the edges not in $T$. Note that in the process each edge not in $T$ is cut twice. Each time we first cut an edge we put a chip at the corresponding endpoint. When the process is over, we put totally $g$ chips and hence get a divisor of degree $g$, denoted by $\beta_{(v,e)}(T)$. It is proved implicitly in \cite{Bernardi} that $\beta_{(v,e)}$ is a bijection. The paper \cite{BW} gives another proof, uses $\beta_{(v,e)}$ to define the Bernardi torsor $b_v$, and proves that $b_v$ does not depend on the choice of $e$ for any fixed vertex $v$. In brief, the action $b_v$ of $\text{Pic}^0(G)$ on $S(G)$ is defined by $[D]\cdot T=T'\Leftrightarrow  \beta_{(v,e)}(T)+D\sim\beta_{(v,e)}(T')$, where $D\in \text{Div}^0(G)$.

In Figure~\ref{fig1}, the red tour shows how the Bernardi process goes and we get $\beta_{(d,dc)}(T)=(d)+(b)$ and $\beta_{(d,dc)}(T')=(f)+(c)$. So for $D=(c)-(d)+(f)-(b)$ one gets $[D]\cdot T=T'$. See also Figure~\ref{fig9-2} for a more complicated example. 


\begin{figure}[h]
            \centering
            \includegraphics[scale=0.6]{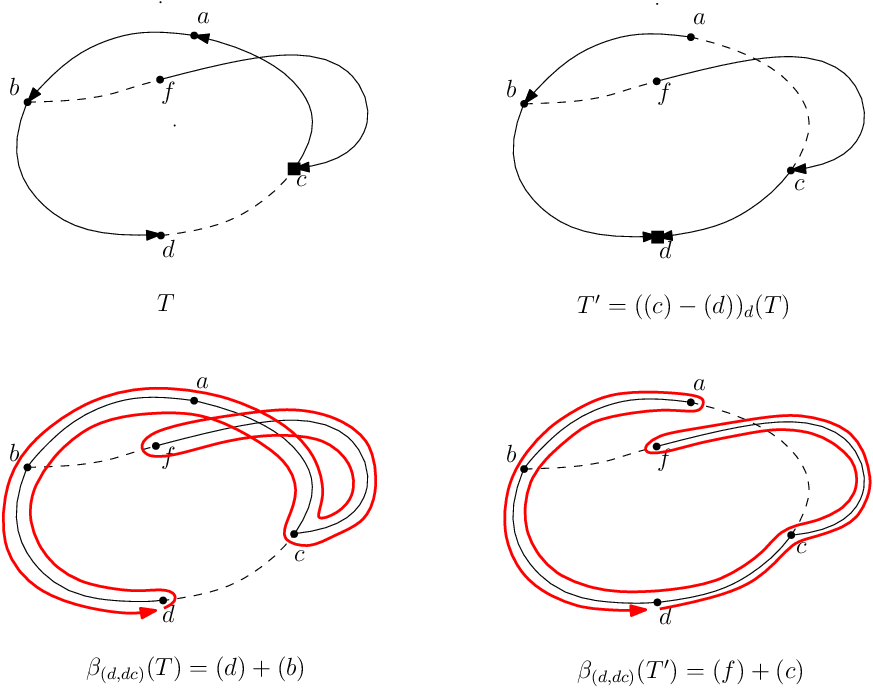}
            \caption{An example of the rotor-routing and Bernardi process. The graph $G$ has five vertices $\{a,b,c,d,f\}$ and six edges. An edge is dashed if and only if it is not in the spanning tree. In the top two pictures, the oriented edges indicate the rotor configuration and the square indicates the chip. In the bottom two pictures, the red tour indicates the Bernardi process. }
            \label{fig1}
\end{figure}

Comparing the two torsors, we get the following lemma, which is obviously true and used a few times in \cite{BW}, although \cite{BW} does not state it as lemma. 

\begin{Lemma} \label{l1}
Fix a vertex $y$ of the graph $G$ and an edge $e$ incident to $y$. The Bernardi and rotor-routing torsors $b_y$ and $r_y$ agree if and only if  $\beta_{(y,e)}(T')-\beta_{(y,e)}(T)\sim (x)-(y)$ for any vertex x and any spanning tree $T$, where $T'=((x)-(y))_y(T)$.
\end{Lemma}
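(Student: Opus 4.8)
The plan is to unwind the definitions of the two torsors and to reduce the comparison to the generators of $\text{Pic}^{0}(G)$. Both $b_y$ and $r_y$ are simply transitive actions of the group $\text{Pic}^{0}(G)$ on the set $S(G)$, so to say that they \emph{agree} means precisely that $[D]\cdot_{b_y} T = [D]\cdot_{r_y} T$ for every class $[D]\in\text{Pic}^{0}(G)$ and every spanning tree $T$. First I would observe that it suffices to verify this equality when $[D]$ ranges over the classes $[(x)-(y)]$, $x\in V(G)$: since $\text{Div}^0(G)$ is freely generated by $\{(x)-(y):x\in V(G)\setminus\{y\}\}$, the images of these divisors generate $\text{Pic}^{0}(G)$ as a group.

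Next I would record the elementary fact that two actions of $\text{Pic}^{0}(G)$ on $S(G)$ which agree on a generating set, for \emph{every} element of $S(G)$, must agree on the whole group. This is a short bootstrapping argument. If $g\cdot_1 T = g\cdot_2 T$ for all $T$, then applying this with $T=g^{-1}\cdot_1 T_0$ gives $g^{-1}\cdot_1 T_0 = g^{-1}\cdot_2 T_0$ for all $T_0$, so agreement passes to inverses; and if the actions agree on both $g$ and $h$ for all trees, then $(gh)\cdot_1 T = g\cdot_1(h\cdot_1 T) = g\cdot_2(h\cdot_2 T) = (gh)\cdot_2 T$, so agreement is closed under products. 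An induction on word length in the generators then yields agreement everywhere.

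Finally I would translate the generator condition through the definitions. Fix $x$ and $T$ and set $T':=((x)-(y))_y(T)$, so that $[(x)-(y)]\cdot_{r_y} T = T'$ by definition of the rotor-routing torsor. By the defining property of the Bernardi torsor, $[(x)-(y)]\cdot_{b_y} T = T'$ holds if and only if $\beta_{(y,e)}(T)+\big((x)-(y)\big)\sim\beta_{(y,e)}(T')$, that is, if and only if $\beta_{(y,e)}(T')-\beta_{(y,e)}(T)\sim (x)-(y)$. Hence the two actions agree on the generator $[(x)-(y)]$ at every tree $T$ exactly when the stated linear equivalence holds for all $T$; quantifying over all $x\in V(G)$ and combining with the two reductions above gives the lemma.

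As the authors note, the statement is essentially immediate, so I expect no hard computational step. The only points requiring care are the two ingredients of the reduction to generators: that the classes $[(x)-(y)]$ really do generate $\text{Pic}^{0}(G)$, and that pointwise agreement on a generating set propagates to the whole group. Everything else is a direct reading of the definitions of $r_y$ and $b_y$, using that $\beta_{(y,e)}$ is a bijection onto the break divisors so that the tree produced by the Bernardi action is uniquely determined by the linear-equivalence condition in its definition.
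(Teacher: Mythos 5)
Your proof is correct, and it matches the paper's treatment: the paper states this lemma without proof, calling it ``obviously true,'' and the intended justification is exactly your argument --- reduce to the generators $[(x)-(y)]$ of $\mathrm{Pic}^0(G)$, note that pointwise agreement of two group actions on generators propagates to the whole group, and unwind the definitions of $r_y$ and $b_y$ (using that the linear-equivalence condition defining the Bernardi action determines $T'$ uniquely via the bijectivity of $\beta_{(y,e)}$). Your writeup simply makes explicit the details the paper leaves to the reader.
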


The next formula is in the proof of Theorem 4.1 in \cite{BW}. 

\begin{Lemma}\label{l2}
In the graph $G$, denote the cyclic ordering of the edges around a vertex $v$ by $(e_1,a_1,\ldots,a_k,e_2,b_1,\ldots,b_l)$. Then $\beta_{(v,e_2)}(T)-\beta_{(v,e_1)}(T)\sim \partial\overrightarrow{e_1}+\partial\overrightarrow{a_1}+\cdots+\partial\overrightarrow{a_k}$, where $T$ is a spanning tree of $G$ and all the oriented edges are oriented away from $v$. 
\end{Lemma}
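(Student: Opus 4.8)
The plan is to exploit the fact that $\beta_{(v,e_1)}(T)$ and $\beta_{(v,e_2)}(T)$ are read off from one and the same Bernardi tour of $T$, the only difference being the flag (directed half-edge) at which we start reading it. Realize the tour as the single closed cyclic sequence of cuts obtained by walking along the boundary of a small neighborhood $N$ of the spanning tree $T$ on the embedding surface. Since $T$ is a spanning tree, $N$ is a disc whose boundary is one circle passing through every corner of $G$ exactly once, so this cyclic sequence visits each flag once and forms a single cycle. Both $(v,e_1)$ and $(v,e_2)$ are flags at $v$, so reading the tour from $(v,e_1)$, respectively from $(v,e_2)$, merely linearizes the same cyclic sequence at a different cut point. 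Let $A$ be the arc of the tour running from the flag $(v,e_1)$ up to (but not including) the flag $(v,e_2)$, and $B$ the complementary arc; thus the $(v,e_1)$-reading is $A$ followed by $B$, while the $(v,e_2)$-reading is $B$ followed by $A$. For a flag $\phi=(x,f)$ write $\overrightarrow{\phi}$ for the edge $f$ oriented away from $x$.

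First I would pin down which cuts lie in $A$. Tracing the boundary of $N$ from $(v,e_1)$, the tour processes the edges around $v$ in the cyclic order $e_1,a_1,\ldots,a_k$, diving into and completely traversing the fattened subtree hanging off $v$ through each of these edges that lies in $T$ and returning to $v$ to look along the next edge, until it arrives at the flag $(v,e_2)$. Consequently the flags occurring in $A$ are exactly the flags $(v,e_1),(v,a_1),\ldots,(v,a_k)$ at $v$, together with all flags at every vertex of $V_A$, where $V_A$ denotes the set of vertices lying in the subtrees of $T$ hanging off $v$ through the tree-edges among $e_1,a_1,\ldots,a_k$. A non-tree edge $f=xy$ has its chip placed at whichever of the two cuts $(x,f)$ and $(y,f)$ comes first; the first-cut endpoint changes between the two readings precisely when exactly one of these two flags lies in $A$, and then the chip moves from the endpoint whose flag lies in $A$ to the endpoint whose flag lies in $B$. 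Summing these displacements (a non-tree edge with both cuts in $A$, or both in $B$, contributes $0$, since $\partial\overrightarrow{(x,f)}+\partial\overrightarrow{(y,f)}=0$) yields the clean formula
\[
\beta_{(v,e_2)}(T)-\beta_{(v,e_1)}(T)=\sum_{\substack{\phi=(x,f)\in A\\ f\notin T}}\partial\overrightarrow{\phi}.
\]

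Finally I would evaluate the right-hand side modulo $\mathrm{Prin}(G)$ by telescoping the tree-edge contributions. For a vertex $u$ let $P_u=\sum_{\phi\text{ at }u}\partial\overrightarrow{\phi}\in\mathrm{Prin}(G)$, summed over all flags at $u$. The non-tree flags in $A$ are the non-tree ones among $(v,e_1),\ldots,(v,a_k)$ together with all non-tree flags at vertices of $V_A$. Now $\sum_{u\in V_A}P_u$ is the sum of $\partial\overrightarrow{\phi}$ over all flags at vertices of $V_A$; among these, the non-tree flags are exactly the non-tree flags of $A$ sitting in $V_A$, while the tree-edge flags cancel in pairs for edges internal to $V_A$ and leave only the tree-edges joining $V_A$ to $v$, namely the tree-edges $g$ among $e_1,\ldots,a_k$, each contributing $\partial\overrightarrow{(u,g)}=-\partial\overrightarrow{g}$ because its flag at $v$ is absent from this sum (there are no tree-edges from $V_A$ to the $e_2,b_1,\ldots,b_l$ side). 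Substituting, the non-tree $v$-flags among $e_1,\ldots,a_k$ and the recovered tree-edges among $e_1,\ldots,a_k$ recombine into $\partial\overrightarrow{e_1}+\partial\overrightarrow{a_1}+\cdots+\partial\overrightarrow{a_k}$, and what is left over is $\sum_{u\in V_A}P_u$, giving
\[
\beta_{(v,e_2)}(T)-\beta_{(v,e_1)}(T)=\partial\overrightarrow{e_1}+\partial\overrightarrow{a_1}+\cdots+\partial\overrightarrow{a_k}+\sum_{u\in V_A}P_u\sim\partial\overrightarrow{e_1}+\partial\overrightarrow{a_1}+\cdots+\partial\overrightarrow{a_k}.
\]
The one genuinely delicate point is the combinatorial identification of the arc $A$, in particular the verification that every flag at a vertex of $V_A$, and no flag at a vertex on the $e_2,b_1,\ldots,b_l$ side, occurs in $A$; I expect this to need the most care and would prove it by a short induction on the subtree structure (or read it off the fattened-tree picture). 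Once it is in hand, the displayed formula and the pairing of principal divisors finish the argument.
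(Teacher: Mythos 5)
Your proof is correct: realizing the Bernardi tour as the boundary circle of the fattened spanning tree, so that $\beta_{(v,e_1)}(T)$ and $\beta_{(v,e_2)}(T)$ are two linearizations of one fixed cyclic sequence of flags, deriving the exact identity $\beta_{(v,e_2)}(T)-\beta_{(v,e_1)}(T)=\sum_{\phi=(x,f)\in A,\ f\notin T}\partial\overrightarrow{\phi}$ from the first-cut bookkeeping, and then absorbing the vertices of $V_A$ into the principal divisors $P_u$ so that the tree-edge flags telescope to exactly the tree edges among $e_1,a_1,\ldots,a_k$, is a sound and complete argument (and the subtree-flag fact you defer --- that the arc $A$ consists precisely of $(v,e_1),(v,a_1),\ldots,(v,a_k)$ together with all flags at vertices of $V_A$ --- is indeed routine by induction on the hanging subtrees, as you say). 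Note that this paper gives no proof of Lemma~\ref{l2} at all --- it imports the formula from the proof of Theorem 4.1 in \cite{BW} --- and your argument is essentially the mechanism used there (shift the starting flag of the fixed cyclic tour, track which non-tree edges have their first cut moved from one endpoint to the other, and dispose of the intermediate subtree contributions via principal divisors), so you have in effect supplied the standard proof that the paper chose to cite rather than reproduce.
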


The next lemma is Lemma 5.3 in \cite{BW}, but we state it in a different way. The original proof still works for our statement. 

\begin{Lemma}\label{l3}
Let $B$ be a partial orientation of $G$, meaning each edge in the graph is oriented in either way or not oriented. 
If $\sum_{\overrightarrow{e}\in B}\partial \overrightarrow{e}\sim 0$ and $B$ contains no directed cycle, then $B$ is a disjoint union of directed cuts in $G$. In particular, if we further assume that $B$ contains at least one oriented edge, then $B$ contains a directed cut.  

\end{Lemma}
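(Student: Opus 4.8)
The plan is to translate the hypothesis into the existence of an integer \emph{potential} on the vertices, read off two competing expressions for the divergence of $B$, and then force the cut structure out of them, bringing in acyclicity only at the very last step.

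First I would unwind the condition $\sum_{\overrightarrow e\in B}\partial\overrightarrow e\sim 0$. It says exactly that the divisor $D:=\sum_{\overrightarrow e\in B}\partial\overrightarrow e$ lies in $\mathrm{Prin}(G)$, i.e.\ that $D$ is an integral combination of the generators of $\mathrm{Prin}(G)$. Spelling this out, there is a function $h\colon V(G)\to\mathbb{Z}$ with
\begin{equation}\label{eq:pot}
D(v)=\sum_{u}m_{uv}\bigl(h(u)-h(v)\bigr)\qquad\text{for every }v\in V(G),
\end{equation}
where $m_{uv}$ denotes the number of edges joining $u$ and $v$. On the other hand, reading off the coefficient of $v$ in $D$ directly from $\partial\overrightarrow{uv}=(v)-(u)$ gives $D(v)=\deg^-_B(v)-\deg^+_B(v)$, the in-degree minus the out-degree of $v$ in $B$. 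So I will have two formulas for the same quantity $D(v)$.

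The key computation is to pair \eqref{eq:pot} with $h$ itself. Evaluating $\sum_v h(v)D(v)$ via the two formulas separately, the combinatorial one telescopes over the oriented edges of $B$ while the algebraic one becomes a sum of squares, yielding
\begin{equation}\label{eq:pair}
\sum_{\overrightarrow{uv}\in B}\bigl(h(u)-h(v)\bigr)=\sum_{\{u,v\}\in E(G)}m_{uv}\bigl(h(u)-h(v)\bigr)^2 .
\end{equation}
Now I would compare the two sides term by term. Every edge contributes its (nonnegative) square on the right, including the non-oriented ones, whereas a $B$-edge $\overrightarrow{uv}$ contributes only $h(u)-h(v)\le\bigl(h(u)-h(v)\bigr)^2$ on the left, the inequality holding for integers with equality exactly when $h(u)-h(v)\in\{0,1\}$. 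Hence equality in \eqref{eq:pair} forces two things simultaneously: every non-$B$ edge is \emph{flat} ($h$ agrees on its endpoints), and every $B$-edge $\overrightarrow{uv}$ satisfies $h(u)-h(v)\in\{0,1\}$, i.e.\ $h$ either stays level or drops by exactly one along the orientation.

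The remaining obstacle — and the only place the acyclicity of $B$ enters — is to eliminate the \emph{flat} $B$-edges (those with $h(u)=h(v)$); without this one only controls $B$ up to edges lying inside a level set of $h$. For this I feed the structure back into \eqref{eq:pot}: since the only non-flat edges are $B$-edges dropping by one, the right-hand side of \eqref{eq:pot} at $v$ equals $\deg^{-,\mathrm{nf}}_B(v)-\deg^{+,\mathrm{nf}}_B(v)$, the degrees counting only the non-flat $B$-edges. Comparing with $D(v)=\deg^-_B(v)-\deg^+_B(v)$ shows that at every vertex the flat $B$-edges are balanced, in-degree equal to out-degree. If the flat $B$-subgraph had an edge, then its head, having positive out-degree by balance, would extend to a directed walk that, in a finite graph, repeats a vertex and closes up into a directed cycle, contradicting that $B$ has no directed cycle. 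Thus the flat $B$-subgraph is empty.

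Consequently every $B$-edge drops $h$ by exactly one and every non-$B$ edge is flat, which is precisely the assertion that $B$ is a disjoint union of directed cuts: setting $A_c=\{v: h(v)\ge c\}$, the edges crossing the threshold $c$ are exactly the $B$-edges with $h(u)=c$ and $h(v)=c-1$, all oriented from $A_c$ to its complement, and each $B$-edge crosses exactly one threshold while each flat (hence non-$B$) edge crosses none. The cuts $C_c$ obtained for the finitely many relevant values of $c$ are therefore directed, pairwise edge-disjoint, and exhaust $B$. In particular, if $B$ contains an oriented edge then $h$ is non-constant and at least one $C_c$ is a nonempty directed cut contained in $B$, giving the final clause.
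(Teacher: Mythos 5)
Your proof is correct, and it is worth noting that the paper does not actually supply its own argument for this lemma: it states that this is Lemma 5.3 of Baker--Wang ``stated in a different way'' and that ``the original proof still works.'' Your write-up is therefore a complete, self-contained substitute, and the details check out. Writing $D=\sum_{\overrightarrow{e}\in B}\partial\overrightarrow{e}\in\mathrm{Prin}(G)$ as a Laplacian image $D(v)=\sum_u m_{uv}\bigl(h(u)-h(v)\bigr)$ is exactly the paper's definition of $\mathrm{Prin}(G)$ (up to replacing $h$ by $-h$); pairing with $h$ and rearranging gives
\[
\sum_{\overrightarrow{uv}\in B}\Bigl[\bigl(h(u)-h(v)\bigr)^2-\bigl(h(u)-h(v)\bigr)\Bigr]\;+\;\sum_{e\notin B}\bigl(h(u)-h(v)\bigr)^2\;=\;0,
\]
and since $t^2-t\ge 0$ for every $t\in\mathbb{Z}$, each term vanishes, which is precisely your conclusion that non-$B$ edges are flat and $B$-edges drop $h$ by $0$ or $1$. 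Your elimination of the flat $B$-edges is also sound: re-reading the two divergence formulas shows the flat $B$-subgraph is Eulerian-balanced at every vertex, and a balanced nonempty finite digraph always contains a directed cycle, contradicting acyclicity --- correctly isolating the only point where the hypothesis on directed cycles enters. The threshold cuts $A_c=\{v:h(v)\ge c\}$ then partition $B$ edge-disjointly into directed cuts (their nonemptiness for $\min h<c\le\max h$ uses the paper's standing assumption that $G$ is connected, which you may want to say explicitly), and the final clause follows as you state. Compared with the reference argument in Baker--Wang, which also introduces a potential $f$ with $\operatorname{div}(f)=D$ but extracts the cut structure from a maximum-principle/level-set analysis, your Dirichlet-pairing inequality is a pleasant shortcut: it forces both the unit-drop property on $B$ and flatness off $B$ in a single sum-of-squares identity rather than peeling off cuts one level at a time, at the modest cost of the separate bookkeeping step needed to rule out flat $B$-edges.
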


To conclude this section, let us prove that the two torsors $b_d$ and $r_d$ disagree in the above example(Figure~\ref{fig1}), which serves as a prototype of Proposition~\ref{A}. In order to show the two torsors $b_d$ and $r_d$ disagree it suffices to prove that $\beta_{(d,dc)}(T')-\beta_{(d,dc)}(T)\sim (c)-(d)$ is false by Lemma~\ref{l1}. This is equivalent to $(f)-(b) \nsim 0$. We take $B=\{\overrightarrow{bf}\}$ in Lemma~\ref{l3} and hence $\sum_{\overrightarrow{e}\in B}\partial \overrightarrow{e}=(f)-(b)$. Assume $(f)-(b) \sim 0$, then the edge $bf$ should be a cut by the lemma, which leads to a contradiction.

\section{Technical Lemmas:a decomposition of non-planar ribbon graph}\label{main lemma}
We will use Lemma~\ref{l1} to prove the main theorem. In general it is very hard to compute 
$T'=((x)-(y))_y(T)$ and $\beta_{(y,e)}(T')-\beta_{(y,e)}(T)$. Our strategy is to decompose a graph into two parts so that the computation is easier. 

\begin{Def}\label{wedge}
Let $G=(V,E)$ be a graph and $G_1=(V_1,E_1),G_2=(V_2,E_2)$ be two subgraphs. If $E$ is the
disjoint union of $E_1$ and $E_2$ and $V_1 \cap V_2=\{ c \}$, then we call $G$ the \emph{wedge sum} of $G_1$ and $G_2$ at $c$, denoted by $G_1 \vee_c G_2$. 
\end{Def}

Note that if $G$ is connected, then $G_1$ and $G_2$ are also connected.

To decompose a non-planar ribbon graph in the sense of Definition~\ref{wedge}, we start by introducing a classical result. See, e.g., \cite{Y}(Lemma 30) for a proof.

\begin{Lemma}\label{type12}
For any non-planar ribbon graph $G$, there exists a subgraph (with the inherited ribbon structure) which is of either type I or type II (defined as follows).
\end{Lemma}

\begin{Def}(See Figure~\ref{fig2})
(1) We say that a ribbon graph is of type I if it consists of three paths whose vertex sequences are $(c, a_1, \cdots, a_n, b)$, $(c, d_1, \cdots, d_m, b)$, and $(c, f_1, \cdots, f_k, b)$, respectively, where all the vertices are distinct and $n,m,k$ could be $0$, and the cyclic ordering of the edges around each vertex is indicated as in the figure. To be precise, the cyclic ordering around $c$ is $(ca_1,cd_1,cf_1)$ and the cyclic ordering around $b$ is $(bf_k,ba_n,bd_m)$. 
\end{Def}

(2) We say that a ribbon graph is of type II if it consists of two cycles  whose vertex sequences are $(c, a_1, \cdots, a_n, c)$ and $(c, f_1, \cdots, f_k, c)$, respectively, where all the vertices are distinct and $n, k$ could be $0$, and the cyclic ordering of the edges around each vertex is indicated as in the figure. To be precise, the cyclic ordering around $c$ is $(ca_1,cf_k,ca_n,cf_1)$.

\begin{figure}[h]
            \centering
            \includegraphics[scale=0.6]{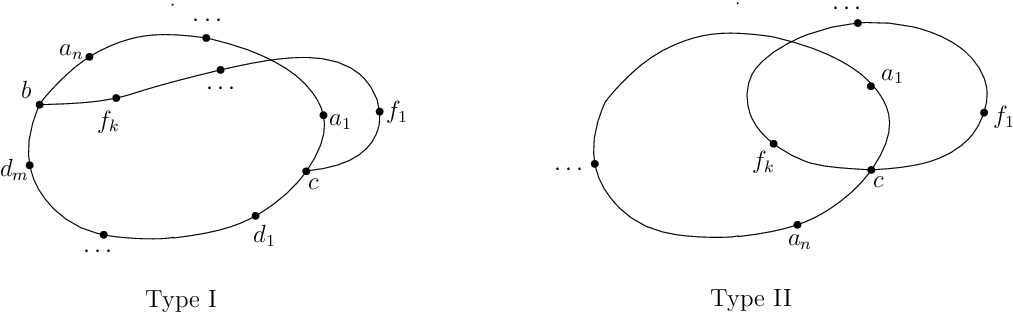}
            \caption{Type I and II}
            \label{fig2}
\end{figure}

Furthermore, we want to show any non-planar ribbon graph $G$ is of either type A or type B as defined below. 

\begin{Def}\label{type1a}(See Figure~\ref{fig3})
\item (1) Assume a ribbon graph $G$ contains a subgraph $H$ of type I. Denote the cyclic ordering of the edges around $c$ in $G$ by $(ca_1,cx_1,\cdots,cx_N,cd_1,cy_1,\cdots,cy_M)$, 
where $N\ge 0$, $M>0$, and $f_1\in\{y_1,\cdots,y_M\}$. Let $G_1$ be the subgraph of $G$ induced by all the edges that are connected to one of the edge $cx_i$'s by a path where $c$ can only be used at the two endpoints. Let $G_2$ be the subgraph of $G$ induced by the edges not in $G_1$. In the case that $G_i$($i\in\{1,2\}$) does not contain any edge, set $G_i$ to be the one single vertex graph $c$. We call $(H,G_1,G_2)$ the \emph{$H$-decomposition}\footnote{Strictly speaking, in Definition~\ref{type1a} and Definition~\ref{type2b} the $H$-decomposition depends not only on $G$ and $H$ but also on $c$ and $ca_1$, so there are more than one $H$-decomposition when $G$ and $H$ are given. However, any $H$-decomposition will work for the remaining part of the paper. } of $G$.

\item (2) We call a non-planar ribbon graph $G$ of \emph{type A} if it contains a subgraph $H$ of type I such that the subgraph $G_1$ in the $H$-decomposition does not contain any vertex in $V(H)\backslash \{ c\}$.     
\end{Def}
\begin{figure}[h]
            \centering
            \includegraphics[scale=0.6]{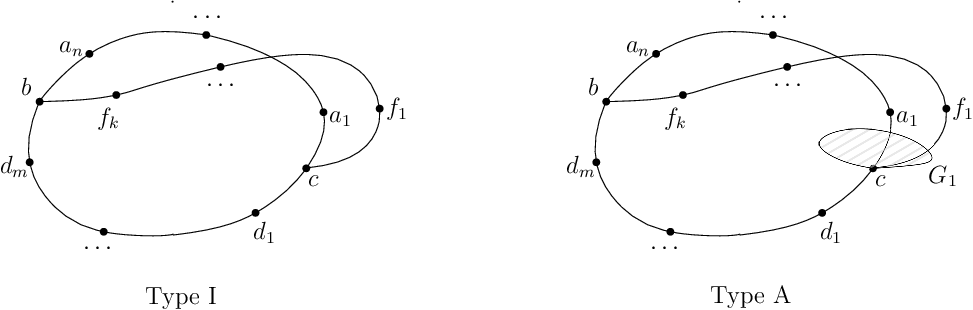}
            \caption{Type I and A}
            \label{fig3}
\end{figure}

\begin{Rem}
Adopt the notation $G,H,G_1,G_2$ of Definition~\ref{type1a}.
\item (1) All the edge $cx_i$'s are in $G_1$. When $N=0$, $G_1$ has no edge. 
\item (2) If $G$ is of type A, then $G_2$ contains $H$. 
\item (3) $G=G_1 \vee_c G_2$. 
\end{Rem}

\begin{Def}\label{type2b} (See Figure~\ref{fig4})
\item (1) Assume a ribbon graph $G$ contains a subgraph $H$ of type II. Denote the cyclic ordering of the edges around $c$ in $G$ by $(ca_1,cx_1,\cdots,cx_N,ca_n,cy_1,\cdots,cy_M)$, 
where $N>0$, $M>0$, and $f_k\in\{x_1,\cdots,x_N\}, f_1\in\{y_1,\cdots,y_M\}$. Let $G_1$ be the subgraph of $G$ induced by all the edges that are connected to one of the edges $cx_i$'s by a path where $c$ can only be used at the two endpoints. Let $G_2$ be the subgraph of $G$ induced by the edges not in $G_1$.  In the case that $G_2$ does not contain any edge, set $G_2$ to be the one single vertex graph $c$. We call $(H,G_1,G_2)$ the \emph{$H$-decomposition} of $G$.

\item (2) We call a non-planar ribbon graph $G$ of \emph{type B} if it contains a subgraph $H$ of type II such that the subgraph $G_1$ in the $H$-decomposition does not contain any of the vertices $a_1,\cdots, a_n$.     
\end{Def}

\begin{figure}[h]
            \centering
            \includegraphics[scale=0.6]{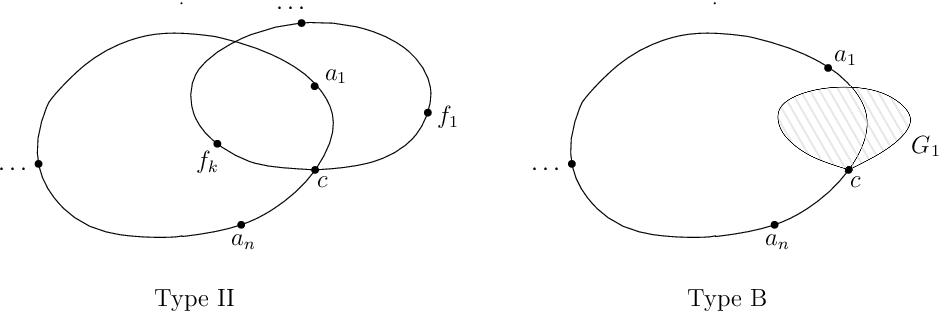}
            \caption{Type II and B}
            \label{fig4}
\end{figure}

\begin{Rem} Adopt the notation $G,H,G_1,G_2$ of Definition~\ref{type2b}.
\item (1) All the edge $cx_i$'s are in $G_1$, and the cycle $\{cf_1, f_1f_2, \cdots, f_kc\}$ is also in $G_1$. 
\item (2) If $G$ is of type B, then $G_2$ contains the cycle $\{ca_1, a_1a_2, \cdots, a_nc\}$. 
\item (3) $G=G_1 \vee_c G_2$. 
\end{Rem}

\begin{Lemma}\label{1toa}
If a non-planar ribbon graph $G$ contains a subgraph $H$ of type I, then $G$ is of type A.
\end{Lemma}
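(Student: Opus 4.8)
The plan is to recast the condition ``$G$ is of type A'' as a purely local statement about the rotation at $c$, and then to establish it by an exchange argument on the type I subgraph.

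First I would reduce to the right reformulation. By the remark following Definition~\ref{type1a}, proving that $G$ is of type A amounts to producing a type I subgraph $H$ (with center $c$) and a choice of first edge $ca_1$ for which the resulting $G_1$ contains no vertex of $V(H)\setminus\{c\}$. The key observation is that $V(H)\setminus\{c\}$ is connected inside $G-c$, since the three arcs $a_1\cdots a_n$, $d_1\cdots d_m$, $f_1\cdots f_k$ all meet at $b$; call the component of $G-c$ that contains it $C$. Writing $D$ for the set of edges at $c$ whose other endpoint lies in $C$, the three first edges $ca_1,cd_1,cf_1$ all belong to $D$ and split the rotation at $c$ into three arcs. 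Unwinding the definition of $G_1$, the chosen decomposition witnesses type A precisely when the arc playing the role of the $x$-block contains no interior edge of $D$. Thus the whole statement reduces to: \emph{some} type I subgraph has two of its three first edges cyclically consecutive among the edges of $D$.

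Next I would run the exchange move. If, for the given $H$, some arc already has no interior $D$-edge, that decomposition already exhibits type A. Otherwise every arc carries an interior edge of $D$; pick such an edge $g=cw$ that is cyclically adjacent to one of the first edges, say to $ca_1$. Since $w\in C$ and $C$ is connected and contains $b$, I would reroute $H$ by discarding one of its three $c$--$b$ paths and replacing it with a new path that starts along $g$, travels through $C$ to $b$, and is internally disjoint from the two retained paths. This produces a new theta subgraph $H'$ whose first edges are $ca_1$, $g$, and one old first edge. To see that $H'$ is again of type I I would check the two cyclic orders: the last edges at $b$ are inherited from $H$, so the rotation at $b$ is unchanged, and at $c$ the three first edges still occur in the non-planar pattern $(ca_1,cd_1,cf_1)$ rather than the planar one. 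Because $g$ was chosen adjacent to $ca_1$ in $D$, the arc between $ca_1$ and $g$ now has no interior $D$-edge, and the corresponding decomposition of $H'$ exhibits $G$ as type A.

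The hard part is guaranteeing that the reroute actually yields a type I subgraph in every configuration. The construction goes through cleanly only when the new path can be steered through $C$ so as to reach $b$ along the correct one of the two remaining paths; the obstruction is exactly when the component of $w$ inside $C$ is trapped by the other two paths, which would force the planar cyclic pattern or break the internal disjointness of the three paths. I expect the main work to be a finite case analysis over which of the paths $P_a,P_d,P_f$ the reroute can avoid, organized by the cyclic symmetry of type I (rotating the roles of $a,d,f$) and, where necessary, by interchanging the two distinguished vertices $c$ and $b$, under which type I is symmetric. Carrying this out---showing that in each case some pair of adjacent $D$-edges bounds a $D$-free arc and is realized by a genuinely non-planar theta---should complete the proof.
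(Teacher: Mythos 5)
Your reduction is sound and matches the paper's setup: since $V(H)\setminus\{c\}$ lies in one component $C$ of $G-c$, the subgraph $G_1$ meets $V(H)\setminus\{c\}$ exactly when the $x$-block contains an edge of $D$, and the cyclic symmetry of type I lets any of the three arcs between consecutive first edges play the role of the $x$-block. The gap is in the exchange move, and you have located it yourself: the ``finite case analysis'' you defer to is the entire content of the lemma. Two specific claims in your reroute are unjustified. First, nothing guarantees a path from $w$ through $C$ to $b$ that is internally disjoint from the two retained paths; such a path may be forced to meet a retained path before reaching $b$, and switching which path you discard does not in general repair this. Second, your assertion that ``the last edges at $b$ are inherited from $H$, so the rotation at $b$ is unchanged'' is false: the new path arrives at $b$ along a new edge, so the triple of last edges at $b$ changes and the cyclic pattern there must be re-verified and can fail. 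Moreover, since a single exchange need not produce a type-A witness, you have no termination measure for whatever iteration the failing cases would require.

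The paper's proof of Lemma~\ref{1toa} resolves exactly these obstructions with two devices absent from your sketch. First, it does not insist on reaching $b$: it truncates the new path at its \emph{first} vertex $z_l$ in $V(H)\setminus\{c\}$, which makes internal disjointness automatic, and it allows the second branch vertex of the theta to move. The case analysis is then on the location of $z_l$: if $z_l$ is interior to the $a$- or $d$-path, no rotation condition at $z_l$ is needed (type I only constrains rotations at the two branch vertices); if $z_l=f_j$ lies on the $f$-path, the new theta has branch vertices $c$ and $f_j$, and the rotation at $f_j$ (two subcases) dictates whether the $d$-path or the $a$-path is rerouted so that both cyclic patterns come out of type I; if $z_l=b$, the rotation at $b$ selects between the first two cases. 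Second, the paper never claims one exchange yields type A; instead each exchange strictly decreases the number $N$ of edges in the $x$-block, and the lemma follows by iterating until the $x$-block is free of $D$-edges. Your proposal has neither the truncation device nor the decreasing parameter, so both the rerouting step and the convergence of the process are unproven; completing your outline would essentially amount to reconstructing the paper's five-case analysis and its induction on $N$.
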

\begin{proof}
Let $G_1$ be as in the first part of Definition~\ref{type1a}. If $G_1$ does not contain any vertex in $V(H)\backslash \{ c\}$, then $G$ is already of type A. Otherwise, by the construction of $G_1$ there exists a path with vertex sequence $(z_0,z_1,z_2,\cdots,z_l)$ where $z_0=c,z_1\in\{x_1,\cdots,x_N\}$, $z_1,z_2,z_3,\cdots,z_{l-1}\neq c$, and $z_l\in V(H)\backslash \{ c\}$. Without loss of generality, we may assume that $z_l$ is the unique vertex in $V(H)\backslash \{ c\}$ on the path. The strategy is to find a "smaller" subgraph $H'$ of type I to substitute $H$. Here by "smaller" we mean the number $N$ in the cyclic ordering $(ca_1,cx_1,\cdots,cx_N,cd_1,cy_1,\cdots,cy_M)$ is smaller. There are $5$ cases based on the different positions of $z_l$ and sometimes the cyclic ordering around $z_l$. (See Figure~\ref{fig5})

\begin{figure}[h]
            \centering
            \includegraphics[scale=0.6]{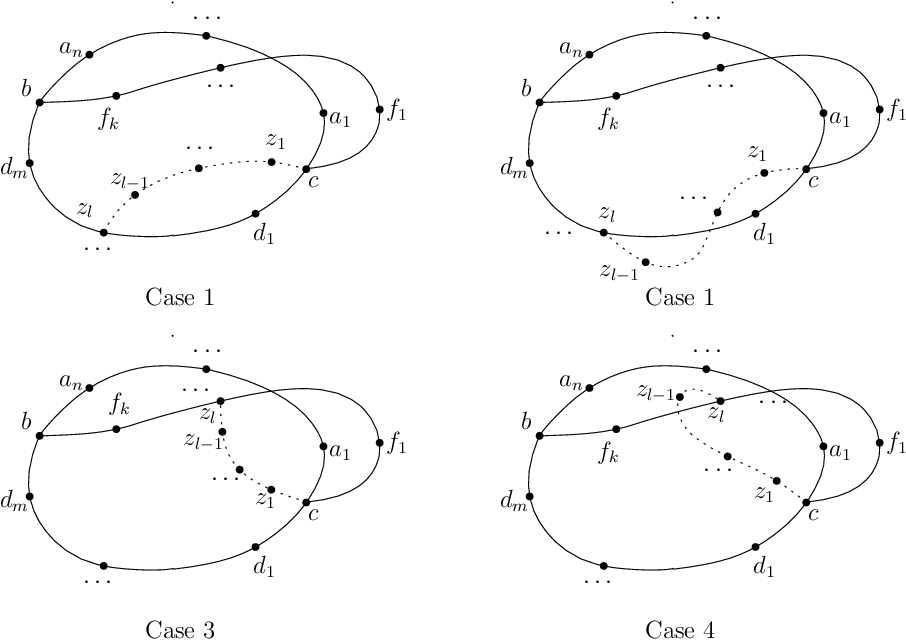}
            \caption{Some cases in the proof of Lemma~\ref{1toa}. The two different positions of $z_{l-1}z_l$ give two configurations in Case 1, but they result in the same way of constructing $H'$.}
            \label{fig5}
\end{figure}

\item Case 1: $z_l=d_j, j\in [m]$.

In this case, the subgraph $H'$ can be obtained by replacing the path $(c,d_1,\cdots,z_l)$ in $H$ with the path $(z_0,z_1,z_2,\cdots,z_l)$.  

\item Case 2: $z_l=a_j, j\in [n]$. 

This case is similar to Case 1 because the graph $H$ is "symmetric" about the path $(c,f_1,\cdots,f_k,b)$.

\item Case 3: $z_l=f_j, j\in [k]$, and the cyclic ordering of edges $\{f_jz_{l-1},f_jf_{j-1}, f_jf_{j+1}\}$ around $f_j$ is $(f_jz_{l-1},f_jf_{j-1},f_jf_{j+1})$.

In this case, the subgraph $H'$ can be obtained by replacing the path $(c,d_1,\cdots,d_m,b)$ in $H$ with the path $(z_0,z_1,z_2,\cdots,z_l)$.  

\item Case 4: $z_l=f_j, j\in [k]$, and the cyclic ordering of edges $\{f_jz_{l-1},f_jf_{j-1}, f_jf_{j+1}\}$ around $f_j$ is $(f_jz_{l-1},f_jf_{j+1},f_jf_{j-1})$.

This case is similar to Case 3 because the graph $H$ is "symmetric" about the path $(c,f_1,\cdots,f_k,b)$.

\item Case 5: $z_l=b$. 

This case can be viewed as a special case of Case 1 or Case 2.

In all these cases, the new subgraph $H'$ is of type I and $N$ decreases strictly. Hence by repeating this process, we can get a subgraph $H$ satisfying the second part of Definition~\ref{type1a}. 
\end{proof}

\begin{Lemma}\label{2tob}
If a non-planar ribbon graph $G$ does not contain a subgraph $H$ of type I, then $G$ is of type B.
\end{Lemma}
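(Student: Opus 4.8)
The plan is to follow the template of the proof of Lemma~\ref{1toa}, but now reducing toward the type B condition of Definition~\ref{type2b} while exploiting the hypothesis that $G$ contains no subgraph of type I. First I would apply Lemma~\ref{type12}: since $G$ is non-planar it contains a subgraph of type I or of type II, and the hypothesis excludes type I, so $G$ contains a subgraph $H$ of type II. Fixing $H$ together with a choice of $c$ and of the edge $ca_1$ produces an $H$-decomposition $(H,G_1,G_2)$; among all type II subgraphs I would choose one for which the number $N$ of edges $cx_1,\ldots,cx_N$ is as small as possible. Recall that the whole $f$-cycle already lies in $G_1$, so the only way the type B condition can fail is that $G_1$ contains one of the vertices $a_1,\ldots,a_n$ of the $a$-cycle.

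Assume, for contradiction, that some $a_j$ lies in $G_1$. By the construction of $G_1$ there is a path with vertex sequence $(z_0,z_1,\ldots,z_l)$ where $z_0=c$, $z_1\in\{x_1,\ldots,x_N\}$, none of $z_1,\ldots,z_{l-1}$ equals $c$, and $z_l$ is the first vertex of the $a$-cycle that is met; write $z_l=a_j$. As in Lemma~\ref{1toa}, I would then reroute the $a$-cycle through the path $(z_0,\ldots,z_l)$ to build a new subgraph $H'$, running a case analysis on the position of $a_j$ along the cycle $c,a_1,\ldots,a_n,c$ (including the boundary vertices $a_1$ and $a_n$) and on the cyclic ordering of the three edges $\{a_j z_{l-1},a_j a_{j-1},a_j a_{j+1}\}$ around $a_j$. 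The symmetry of the type II picture about $c$ should, as before, collapse several of these cases.

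The decisive feature is that the ribbon structure at $a_j$ fixes the interleaving of the four edges incident to $c$ in $H'$, and hence determines whether $H'$ is of type I or of type II. In those configurations where $H'$ is again of type II, the rerouting replaces an arc of the old $a$-cycle by the path $(z_0,\ldots,z_l)$ whose first edge $cz_1$ is an $x$-edge, which I expect to decrease the parameter $N$ strictly, contradicting the minimal choice of $H$. In the remaining configurations the rerouting should yield a genuine type I subgraph of $G$, contradicting the standing hypothesis. In either case we reach a contradiction, so no $a_j$ lies in $G_1$; that is, $G$ is of type B.

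The step I expect to be the main obstacle is the bookkeeping of this case analysis: enumerating the positions of $a_j$ together with the two possible cyclic orderings at $a_j$, and verifying in each case that $H'$ is exactly of the asserted type, that $N$ strictly drops in the type II cases, and that the type I cases really do produce a valid type I subgraph. This last point is precisely where the no-type-I hypothesis is consumed, and getting the interleaving of the four edges at $c$ right in each case is the delicate part.
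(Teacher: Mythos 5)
There is a genuine gap, and it concerns where the case analysis, and with it the no-type-I hypothesis, is applied. In a ribbon graph the cyclic orderings at distinct vertices are independent data, so your ``decisive feature'' is false: the rotation at $z_l=a_j$ cannot fix the interleaving of the four edges incident to $c$ in $H'$. Indeed, type II imposes a condition only at the vertex $c$, so the rotation at $a_j$ is irrelevant to whether $H'$ is of type II, and the paper performs no case analysis at $z_l$ at all. The dichotomy that actually decides which arc of the $a$-cycle to replace lives at $c$: writing $f_k=x_i$ and $z_1=x_j$ (with $i\neq j$), if $j>i$ one replaces the path $(c,a_n,\cdots,z_l)$ by $(z_0,z_1,\cdots,z_l)$, and if $j<i$ one replaces $(c,a_1,\cdots,z_l)$; in either case the four distinguished edges at $c$ remain interleaved, $H'$ is of type II, and $N$ strictly decreases. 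A scheme keyed to the rotation at $a_j$ can pick the wrong arc (for instance, $j>i$ while keeping the $ca_n$-arc makes the two cycle edges at $c$ adjacent rather than separating $cf_k$ from $cf_1$, so $H'$ is not of type II), and your type-I contradiction at $a_j$ fires for only one of the two possible rotations there, leaving the other case without any conclusion.

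Second, you never address the step for which the hypothesis is actually needed: you must show that the connecting path $(z_0,z_1,\cdots,z_l)$ avoids the vertices $f_1,\cdots,f_k$. If it does not, the rerouted cycle meets the $f$-cycle at a vertex other than $c$, so the resulting subgraph fails to be of type II no matter which arc of the $a$-cycle you keep, and your descent on $N$ stalls. The paper's proof rules this out as its key fact: taking the last vertex $z_j=f_{j'}$ of the path lying on the $f$-cycle, a two-case analysis of the rotation at that vertex (on the edges $z_jz_{j+1}$, $f_{j'}f_{j'+1}$, $f_{j'}f_{j'-1}$) splices the tail $(z_j,\cdots,z_l)$ with one of the arcs $(c,a_1,\cdots,z_l)$ or $(c,a_n,\cdots,z_l)$ of $H$ into a subgraph $\widetilde{H}$ of type I, contradicting the hypothesis. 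So the hypothesis is consumed to clean the path of $f$-vertices, not to dispose of unfavorable rotations at $a_j$. Your overall framework (extract a type II subgraph via Lemma~\ref{type12}, minimize $N$, reroute, derive a contradiction) has the right shape, but without these two repairs the argument does not go through.
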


\begin{proof}
By Lemma~\ref{type12} the graph $G$ must contain a subgraph $H$ of type II. Let $G_1$ be as in the first part of Definition~\ref{type2b}. If $G_1$ does not contain any of the vertices $a_1,\cdots, a_n$, then $G$ is already of type B.
Otherwise, by the construction of $G_1$ there exists a path with vertex sequence $(z_0,z_1,z_2,\cdots,z_l)$ where $z_0=c,z_1\in\{x_1,\cdots,x_N\}$, $z_1,z_2,z_3,\cdots,z_{l-1}\neq c$, and $z_l\in\{a_1,\cdots,a_n\}$. Without loss of generality, we may assume that $z_l$ is the unique vertex in $\{a_1,\cdots,a_n\}$ on the path. Similar to the proof of Lemma~\ref{1toa}, the strategy is to find a "smaller" subgraph $H'$ of type II to substitute $H$. 

The key fact here is that the path cannot contain any of the vertices $f_1,\cdots,f_k$. Otherwise, we can get a subgraph $\widetilde{H}$ of type I, which leads to a contradiction. Indeed, let $z_j$ be the last vertex in the set $\{f_1,\cdots,f_k\}$ on the path and say $z_j=f_{j'}$. Then we consider the subpath $L=(z_j,z_{j+1},\cdots,z_l)$(see Figure~\ref{fig61}). If the cyclic ordering of edges $\{z_jz_{j+1}, f_{j'}f_{j'+1}, f_{j'}f_{j'-1}\}$ around $z_j(=f_{j'})$ is $(z_jz_{j+1}, f_{j'}f_{j'+1}, f_{j'}f_{j'-1})$, then we can obtain $\widetilde{H}$ of type I by replacing the path $(c,a_n,\cdots,z_l)$ in $H$ with the path $L$. If the cyclic ordering of edges $\{z_jz_{j+1}, f_{j'}f_{j'+1}, f_{j'}f_{j'-1}\}$ around $z_j(=f_{j'})$ is $(z_jz_{j+1}, f_{j'}f_{j'-1}, f_{j'}f_{j'+1})$, then we can obtain $\widetilde{H}$ of type I by replacing the path $(c,a_1,\cdots,z_l)$ in $H$ with the path $L$.  

\begin{figure}[h]
            \centering
            \includegraphics[scale=0.6]{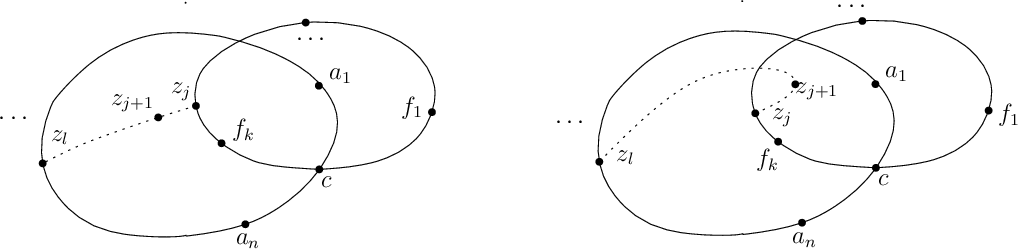}
            \caption{Figures used for the construction of $\widetilde{H}$ in Lemma~\ref{2tob}}
            \label{fig61}
\end{figure}

The remaining part of the proof is similar to the proof of Lemma~\ref{1toa}. Set $f_k=x_i,z_1=x_j$ for some $i,j\in [N]$(see Figure~\ref{fig62}). Note that $i\neq j$ because $f_k$ is not in the path. If $j>i$, we can get $H'$ by replacing the path $(c,a_n,\cdots,z_l)$ in $H$ by the path $(z_0,z_1,z_2,\cdots,z_l)$; if $j<i$, we can get $H'$ by replacing the path $(c,a_1,\cdots,z_l)$ in $H$ by the path $(z_0,z_1,z_2,\cdots,z_l)$. In both cases, $H'$ is of type II and the number $N$ in the cyclic ordering $(ca_1,cx_1,\cdots,cx_N,ca_n,cy_1,\cdots,cy_M)$ decreases strictly. So by repeating this process, we can get a subgraph $H$ making $G$ of type B.  

\begin{figure}[h]
            \centering
            \includegraphics[scale=0.6]{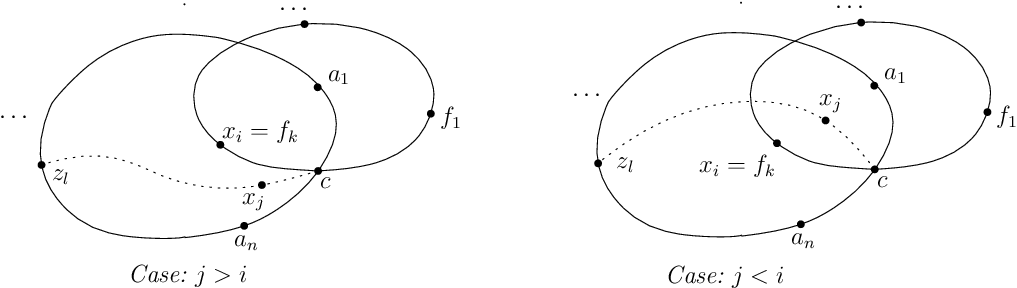}
            \caption{Figures used for the construction of $H'$ in Lemma~\ref{2tob}}
            \label{fig62}
\end{figure}

\end{proof}

\begin{Cor}\label{AB}
Any non-planar ribbon graph is of type A or of type B (or of both types). 
\end{Cor}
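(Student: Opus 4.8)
The plan is to perform a simple dichotomy on whether $G$ admits a subgraph of type I, and then invoke the two technical lemmas just established. Fix an arbitrary non-planar ribbon graph $G$. There are exactly two (mutually exclusive and jointly exhaustive) possibilities: either $G$ contains a subgraph of type I, or it does not.

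In the first case, suppose $G$ contains a subgraph $H$ of type I. Then Lemma~\ref{1toa} applies directly and yields that $G$ is of type A, so we are done. In the second case, suppose $G$ contains no subgraph of type I. Then Lemma~\ref{2tob} applies verbatim and yields that $G$ is of type B. Note that no separate appeal to Lemma~\ref{type12} is needed at this outer level: the proof of Lemma~\ref{2tob} already invokes Lemma~\ref{type12} internally to produce a type II subgraph from which to start the replacement argument, so the existence of the needed witness is guaranteed inside that lemma.

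Since these two cases cover every non-planar ribbon graph, and the conclusion reached in each is precisely one of the two alternatives asserted by the corollary, the proof is complete. The parenthetical phrase \emph{(or of both types)} needs no extra work: a graph that happens to contain both a type I witness and a type II witness simply satisfies both conclusions at once, and nothing in the dichotomy forbids this overlap; the statement is an inclusive disjunction.

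I do not expect any genuine obstacle at this step. All of the combinatorial difficulty—namely the construction of the strictly smaller replacement subgraphs $H'$ across the various positional cases for $z_l$, and the key observation in the type II analysis that the connecting path cannot pass through any $f_i$ without producing a forbidden type I subgraph—has already been absorbed into Lemmas~\ref{1toa} and~\ref{2tob}. The corollary is therefore a pure bookkeeping step that simply records the disjunction of these two results over the exhaustive case split.
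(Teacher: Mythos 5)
Your proof is correct and matches the paper's, which simply states that the corollary is a direct consequence of Lemma~\ref{1toa} and Lemma~\ref{2tob}; your dichotomy on the existence of a type I subgraph is exactly the intended case split, and your observation that Lemma~\ref{type12} is already invoked inside Lemma~\ref{2tob} is accurate.
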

\begin{proof}
This is a direct consequence of Lemma~\ref{1toa} and Lemma~\ref{2tob}.
\end{proof}

\section{Proof of the main result}\label{Proof}
In this section we present the proof of Theorem~\ref{main}. It consists of two parts: one is for type A and the other is for type B. 

We first state a basic lemma. Let $G$ be a graph. Assume $G=G_1 \vee_c G_2$, where $G_1$ and $G_2$ are two subgraphs. Because $\text{Div}^0(G)$ is freely generated by $\{(v)-(c): v\in V(G)\backslash\{c\}\}$, we have group isomorphisms 
$$
\text{Div}^0(G)\simeq 
\bigoplus_{v\in V(G)\backslash\{c\}}\mathbb{Z}((v)-(c))\simeq 
\text{Div}^0(G_1)\oplus\text{Div}^0(G_2),
$$
and the composition  $\phi:\text{Div}^0(G)\longrightarrow\text{Div}^0(G_1)\oplus\text{Div}^0(G_2)$ is $D\mapsto (D_1,D_2)$ such that $D$ can be uniquely written as $D=D_1+D_2$, where $D_i\in\text{Div}^0(G_i)$, $i=1,2$. Furthermore, we have the following isomorphism.

\begin{Lemma}\label{l4}
The map $\phi$ descents to an isomorphism  $\overline{\phi}:\text{Pic}^0(G)\longrightarrow\text{Pic}^0(G_1)\oplus\text{Pic}^0(G_2)$. 
\end{Lemma}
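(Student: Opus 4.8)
The plan is to show that the isomorphism $\phi$ on degree-zero divisors carries $\text{Prin}(G)$ \emph{exactly} onto $\text{Prin}(G_1)\oplus\text{Prin}(G_2)$; since $\phi$ is already a group isomorphism $\text{Div}^0(G)\to\text{Div}^0(G_1)\oplus\text{Div}^0(G_2)$, this matching of subgroups immediately produces the induced isomorphism on the quotients $\text{Pic}^0$.

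First I would fix notation for the generators. For a vertex $v$ of a graph $\Gamma$, write $\pi_v^\Gamma=\sum_{\overrightarrow{uv}\text{ incident to }v}\partial\overrightarrow{uv}\in\text{Prin}(\Gamma)$, so that $\text{Prin}(\Gamma)$ is generated by $\{\pi_v^\Gamma:v\in V(\Gamma)\}$. Because each edge contributes with opposite signs at its two endpoints, $\sum_{v\in V(\Gamma)}\pi_v^\Gamma=0$; hence the generator $\pi_c^\Gamma$ is redundant and $\text{Prin}(\Gamma)$ is already generated by the reduced set $\{\pi_v^\Gamma:v\in V(\Gamma)\setminus\{c\}\}$. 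I will use this for $\Gamma\in\{G,G_1,G_2\}$.

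The crux is the behaviour of $\phi$ on these generators, which is forced by the wedge structure $G=G_1\vee_c G_2$. If $v\in V(G_1)\setminus\{c\}$, then $v\notin V(G_2)$, so no edge of $G_2$ is incident to $v$ and \emph{every} edge incident to $v$ lies in $G_1$; consequently $\pi_v^G=\pi_v^{G_1}$ as an element of $\text{Div}^0(G_1)\subseteq\text{Div}^0(G)$, and therefore $\phi(\pi_v^G)=(\pi_v^{G_1},0)$. Symmetrically $\phi(\pi_v^G)=(0,\pi_v^{G_2})$ for $v\in V(G_2)\setminus\{c\}$. Applying $\phi$ to the reduced generating set for $\text{Prin}(G)$ then gives
$$
\phi(\text{Prin}(G))=\big\langle(\pi_v^{G_1},0):v\in V(G_1)\setminus\{c\}\big\rangle+\big\langle(0,\pi_v^{G_2}):v\in V(G_2)\setminus\{c\}\big\rangle=\text{Prin}(G_1)\oplus\text{Prin}(G_2),
$$
where the final equality uses the reduced generating sets for $\text{Prin}(G_1)$ and $\text{Prin}(G_2)$ from the previous step.

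Finally I would invoke the elementary fact that an isomorphism of abelian groups sending a subgroup onto a subgroup descends to an isomorphism of the quotients: since $\phi$ is an isomorphism with $\phi(\text{Prin}(G))=\text{Prin}(G_1)\oplus\text{Prin}(G_2)$, it induces an isomorphism
$$
\overline{\phi}:\text{Pic}^0(G)=\frac{\text{Div}^0(G)}{\text{Prin}(G)}\longrightarrow\frac{\text{Div}^0(G_1)\oplus\text{Div}^0(G_2)}{\text{Prin}(G_1)\oplus\text{Prin}(G_2)}=\text{Pic}^0(G_1)\oplus\text{Pic}^0(G_2).
$$
I do not expect a serious obstacle; the only point requiring genuine care is the identity $\pi_v^G=\pi_v^{G_1}$ for $v\neq c$, that is, confirming that the wedge condition $V(G_1)\cap V(G_2)=\{c\}$ really forces all edges at a non-cut vertex onto a single side, so that $\phi$ respects the Laplacian generators cleanly.
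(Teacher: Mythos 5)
The paper gives no written proof of Lemma~\ref{l4} (it is ``left to the reader''), so there is nothing to diverge from: your argument is correct and is precisely the standard verification the author intends. The key point you rightly isolate---that the wedge condition $V(G_1)\cap V(G_2)=\{c\}$ forces every edge at a vertex $v\neq c$ into a single side, so $\phi$ sends the reduced Laplacian generators to $(\pi_v^{G_1},0)$ or $(0,\pi_v^{G_2})$ and hence $\phi(\text{Prin}(G))=\text{Prin}(G_1)\oplus\text{Prin}(G_2)$---is exactly what makes the quotient isomorphism immediate, and your use of the redundancy $\sum_{v}\pi_v^{\Gamma}=0$ to drop the generator at $c$ is the right bookkeeping.
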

\begin{proof}
The proof is left to the reader. 
\end{proof}

\noindent\textbf{Notation:} Given a decomposition $G=G_1 \vee_c G_2$, we denote by $\sim_i$ the linear equivalence relation with respect to $G_i$, $i=1,2$. Similarly, we denote by $\beta^i_{(v_i,e_i)}$ the Bernardi bijection with respect to $G_i$, $i=1,2$. 

\begin{Rem}\label{remark}
Let $G=G_1 \vee_c G_2$ and $D=D_1+D_2$ be a divisor of $G$, where $D_i\in\text{Div}^0(G_i)$, $i=1,2$. By Lemma~\ref{l4}, in order to show $D\nsim 0$, it suffices to prove that $D_1\nsim_1 0$ or $D_2\nsim_2 0$.

\end{Rem}

We are now ready to prove Theorem~\ref{main} for ribbon graphs of type A.

\begin{Prop}\label{A}
If a non-planar ribbon graph $G$ is of type A, then the Bernardi and rotor-routing torsors $b_v$ and $r_v$ do not agree for some vertex $v$ of $G$.
\end{Prop}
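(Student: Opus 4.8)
The plan is to use Lemma~\ref{l1}, which reduces disagreement of $b_v$ and $r_v$ to exhibiting a single vertex $x$ and spanning tree $T$ for which $\beta_{(v,e)}(T')-\beta_{(v,e)}(T)\nsim(x)-(v)$, where $T'=((x)-(v))_v(T)$. I would take the basepoint to be $v=c$, the cut vertex of the wedge decomposition $G=G_1\vee_c G_2$ guaranteed by type A, and choose the initial edge $e$ and the chip position $x$ inside the relevant part so that the rotor-routing process is forced to stay in a controlled region. Because $G$ is of type A, the subgraph $G_1$ meets $V(H)$ only at $c$, and $G_2$ contains the entire type-I subgraph $H$; this is exactly the structural leverage that should let me place the chip $x$ in $G_2$ so that the rotor-routing computation $T'$ differs from $T$ only on edges within $G_2$, mirroring the prototype worked out at the end of Section~\ref{Background}.

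The concrete strategy is to reduce everything to the type-I subgraph $H$ sitting inside $G_2$. First I would pick $T$ so that its restriction to $G_1$ is any spanning tree of $G_1$ and its restriction to $G_2$ reproduces the prototype configuration: the edges $ca_1,cf_1$ (or their analogues with the paths $c a_1\cdots a_n b$ and $cf_1\cdots f_k b$ and $cd_1\cdots d_m b$) arranged so that routing a chip from a suitable $x\in V(H)$ to the sink $c$ produces the telltale asymmetry. Using the cyclic orderings at $c$ and at $b$ specified in Lemma~\ref{type12} for type I, I expect the rotor-routing to swap exactly one pair of edges near $b$, so that the divisor $\beta_{(c,e)}(T')-\beta_{(c,e)}(T)-\big((x)-(c)\big)$ equals, up to principal divisors, an expression of the form $\partial\overrightarrow{e}$ for a single edge $e$ of $H$, just as $(f)-(b)$ arose in the worked example.

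To finish I would invoke Remark~\ref{remark} together with Lemma~\ref{l3}. Writing the candidate nonzero divisor as $D=D_1+D_2$ with $D_i\in\text{Div}^0(G_i)$, I expect $D_1=0$ and $D_2$ to be supported on $V(H)\setminus\{c\}$, so by Remark~\ref{remark} it suffices to show $D_2\nsim_2 0$ in $G_2$. For this I take the partial orientation $B$ consisting of the single oriented edge realizing $D_2=\sum_{\overrightarrow{e}\in B}\partial\overrightarrow{e}$, and argue via Lemma~\ref{l3} that if $D_2\sim_2 0$ then that edge would have to be a directed cut in $G_2$; but inside the type-I subgraph $H$ the two paths from $c$ to $b$ together with the third path form cycles that prevent any single edge of $H$ from being a cut, giving the contradiction. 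The main obstacle I anticipate is the bookkeeping in the Bernardi process: I must compute $\beta_{(c,e)}(T')-\beta_{(c,e)}(T)$ precisely enough to identify the surviving edge, and this requires carefully tracking how the tour through the surface visits the edges of $H$ under the prescribed ribbon structure—Lemma~\ref{l2} should help by letting me change the initial edge $e$ at $c$ in a controlled way, but verifying that the chip's rotor-routing trajectory stays within $G_2$ and lands the discrepancy on a non-cut edge of $H$ is the delicate part.
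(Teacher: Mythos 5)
Your reduction scaffolding (Lemma~\ref{l1}, the wedge decomposition, Remark~\ref{remark}, and Lemma~\ref{l3} at the end) matches the paper's, and your observation that with sink $c$ a chip started in $G_2$ never leaves $G_2$ is correct. But the core of the proposal has a genuine gap: with basepoint $v=c$ and chip $x\in G_2$, the entire rotor-routing trajectory and the entire Bernardi discrepancy live inside $G_2$, which is an essentially arbitrary non-planar ribbon graph containing $H$ (it contains $H$ plus whatever hangs off the $cy_j$'s and off the vertices of $H$). You have therefore reduced the statement for $G$ to the same unproved statement, ``$b_c\neq r_c$ for a non-planar graph containing a type-I subgraph,'' for the smaller graph $G_2$ --- the type-A structure, whose whole content is the position of the $G_1$-edges $cx_1,\dots,cx_N$ between $ca_1$ and $cd_1$ in the rotation at $c$, plays no role in your computation and buys you nothing. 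Concretely, your expectation that ``the rotor-routing swaps exactly one pair of edges near $b$'' and that the discrepancy is a single $\partial\overrightarrow{e}$ is false in general: the chip's route from $x$ to $c$ can wander through all the extra material of $G_2$, turning many rotors, so neither $T_2'$ nor $\beta_{(c,e)}(T')-\beta_{(c,e)}(T)$ can be pinned down; the single-edge divisor $(f)-(b)$ occurs only in the prototype where $G_2=H$ with $n=m=k=1$.

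The paper's proof resolves exactly this difficulty by an opposite choice that you should compare with: sink at $d_1$ (not $c$), chip at $c$, and $T$ containing all of $H$ except $cd_1$ and $bf_k$. Then the uncontrollable wandering happens inside $G_1$ --- the chip leaves $c$ along $cx_1$, roams $G_1$, and each return to $c$ just advances the rotor through $cx_2,\dots,cx_N$ until it exits along $cd_1$ to the sink --- so the change in $G_2$ is \emph{forced} to be the minimal swap $T_2'=T_2\cup\{cd_1\}\backslash\{ca_1\}$, while the arbitrary change in $G_1$ is erased at the level of $\mathrm{Pic}^0(G_1)\oplus\mathrm{Pic}^0(G_2)$ by Lemma~\ref{l4}. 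Even then, the discrepancy is not one edge: comparing the two Bernardi tours of $T_2$ and $T_2'$ (which share Parts II, III, IV with the order of II and III exchanged) yields
$$\beta^2_{(d_1,d_1c)}(T_2')-\beta^2_{(d_1,d_1c)}(T_2)=(c)-(d_1)+\sum_{\overrightarrow{e}\in B}\partial\overrightarrow{e},$$
where $B$ is a whole partial orientation of non-tree arcs crossing from Part II to Part III, of which $\overrightarrow{bf_k}$ is merely one guaranteed member. The concluding step then needs Lemma~\ref{l3} in its full strength: if $\sum_{\overrightarrow{e}\in B}\partial\overrightarrow{e}\sim_2 0$, then $B$ contains a directed cut, so $G_2$ minus the edges of $B$ is disconnected, contradicting that the spanning tree $T_2$ avoids $B$; your ``a single edge of $H$ cannot be a cut because of the cycles in $H$'' argument handles only the prototype and would not survive the general $B$. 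To repair your proposal you would have to import precisely these two ideas --- relocating the sink to $d_1$ so the $G_2$-change is a forced single rotation at $c$, and the Part II/III tour-exchange analysis producing a general partial orientation $B$ --- at which point you would have reconstructed the paper's proof.
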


\begin{proof}
Let $G$ and $(H,G_1,G_2)$ be as in Definition~\ref{type1a}. Recall that the cyclic ordering of the edges around $c$ in $G$ is denoted by $(ca_1,cx_1,\cdots,cx_N,cd_1,cy_1,\cdots,cy_M)$. Because $G$ is of type $A$, $G_2$ contains $H$ and $G_1$ contains all the edge $cx_i$'s. We want to prove that $b_{d_1}\neq r_{d_1}$.

Let $T$ be a spanning tree of $G$ that contains every edge in $H$ except $cd_1$ and $bf_k$ (see Figure~\ref{fig7-1}). Let $d_1$ be the sink and put the chip at $c$. Then by applying the rotor-routing process, we get another spanning tree $T'=((c)-(d_1))_{d_1}(T)$. By Lemma~\ref{l1}, it suffices to show 
$$\beta_{(d_1,d_1c)}(T')-\beta_{(d_1,d_1c)}(T)\nsim (c)-(d_1).$$
Let $T=T_1 \vee_c T_2$, where $T_i$ is obtained by restricting $T$ to $G_i$ ($i=1,2$).  Similarly, let  $T'=T'_1 \vee_c T'_2$, where $T'_i$ is obtained by restricting $T'$ to $G_i$ ($i=1,2$). 

\begin{figure}[h]
            \centering
            \includegraphics[scale=0.6]{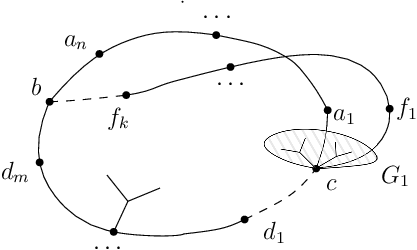}
            \caption{The spanning tree $T$. An edge is dashed if and only if it is not in the tree. The part of the tree not in $H$ is drawn in an abstract way.}
            \label{fig7-1}
\end{figure}

We claim that $T_2'=T_2\cup\{cd_1\}\backslash\{ca_1\}$ (see Figure~\ref{fig7-2}). It is because in the rotor-routing process after the chip goes into $G_1$, the process does not affect $G_2$ until the chip quits $G_1$ along the edge $cd_1$ and thereby reaches the sink $d_1$. 

\begin{figure}[h]
            \centering
            \includegraphics[scale=0.6]{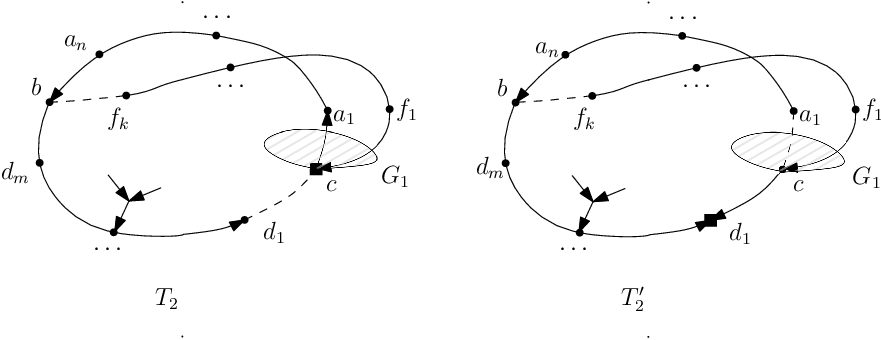}
            \caption{The rotor-routing process for $T'=((c)-(d_1))_{d_1}(T)$. The oriented edges indicate the rotor configuration and the square indicates the chip. The rotor configuration inside $G_1$ is omitted.}
            \label{fig7-2}
\end{figure}

Now consider $\beta_{(d_1,d_1c)}(T')$ and $\beta_{(d_1,d_1c)}(T)$. We have 
$$\beta_{(d_1,d_1c)}(T')-\beta_{(d_1,d_1c)}(T)=\beta^2_{(d_1,d_1c)}(T_2')-\beta^2_{(d_1,d_1c)}(T_2)+D,$$
where the divisor $D$ is the part of $\beta_{(d_1,d_1c)}(T')-\beta_{(d_1,d_1c)}(T)$ contributed by $G_1$. 

By Lemma~\ref{l4} (and Remark~\ref{remark}), if $\beta^2_{(d_1,d_1c)}(T_2')-\beta^2_{(d_1,d_1c)}(T_2)
\nsim_2 (c)-(d_1)$, then $\beta_{(d_1,d_1c)}(T')-\beta_{(d_1,d_1c)}(T)\nsim (c)-(d_1)$ as desired.

By comparing the Bernardi tours of $T_2$ and $T_2'$ (see Figure~\ref{fig7-3}), we will write  
\begin{equation}\label{eq0}
    \beta^2_{(d_1,d_1c)}(T_2')-\beta^2_{(d_1,d_1c)}(T_2)=(c)-(d_1)+\sum_{\overrightarrow{e}\in B}\partial \overrightarrow{e},
\end{equation}
where $B$ is a partial orientation of $G_2$ which we now define. The following 6-step process describes the tour of $T_2$ and we regroup them into four parts. 
\begin{enumerate}
\item The tour cuts the edge $cd_1$ at $d_1$. (Part I)
\item This step starts right after the end of the previous step and ends right before the tour visits the edge $a_1c$ for the first time. (Part II)
\item The tour goes along the edge $a_1c$ and then cuts $cd_1$ at $c$. (Part I)
\item This step starts right after the end the previous step and ends right before the tour visits the edge $a_1c$ for the second time.  (Part III)
\item The tour goes along $ca_1$ without cutting any edge. (Part I)
\item This step finishes the remaining part of the tour. (Part IV)
\end{enumerate}

Similarly, the following 6-step process describes the tour of $T'_2$ and we regroup them into four parts. 
\begin{enumerate}
\item The tour goes along the edge $d_1c$ without cutting any edge. (Part I')
\item This step starts right after the end the previous step and ends right before the tour cuts the edge $ca_1$ for the first time. (Part III)
\item The tour cuts the edge $ca_1$ at $c$ and then goes along $cd_1$ without cutting any edge. (Part I')
\item This step starts right after the end of the previous step and ends right before the tour cuts the edge $ca_1$ for the second time. (Part II)
\item The tour cuts the edge $ca_1$ at $a_1$. (Part I')
\item This step finishes the remaining part of the tour. (Part IV)
\end{enumerate}

The key observation is that the two tours share Parts II, III, IV, but the order of visits of Parts II and III is exchanged. Now we can calculate the left hand side of \eqref{eq0}. One finds that Parts II, III, IV contribute the divisor $\sum_{\overrightarrow{e}\in B}\partial \overrightarrow{e}$, where $B$ is the partial orientation of $G_2$ consisting of the arcs going from Part II to Part III, and the difference between Part I and Part I' contributes the divisor $(c)-(d_1)$.  So \eqref{eq0} holds.

\begin{figure}[h]
            \centering
            \includegraphics[scale=0.6]{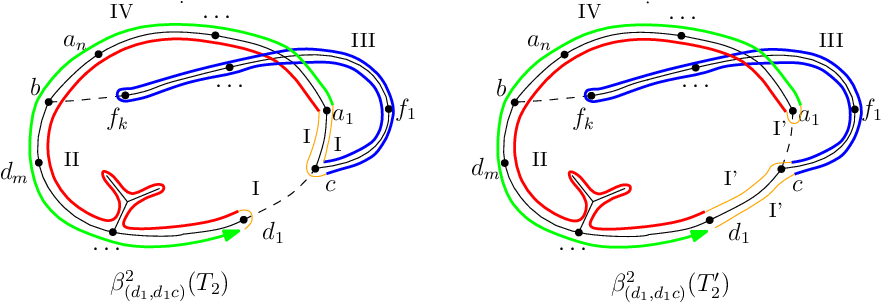}
            \caption{The Bernardi tours of $T_2$ and $T'_2$. Parts II, III, and IV are indicated in red, blue, and green respectively.}
            \label{fig7-3}
\end{figure}

It remains to show $\sum_{\overrightarrow{e}\in B}\partial \overrightarrow{e}\nsim_2 0$. Assume by contradiction that $\sum_{\overrightarrow{e}\in B}\partial \overrightarrow{e}\sim_2 0$. Then by Lemma~\ref{l3}, $B$ is a disjoint union of directed cuts in $G_2$. Note that $B$ contains at least one arc $\overrightarrow{bf_k}$, so $B$ contains a directed cut and hence $G_2\backslash \{\text{edges in }  B\}$ is disconnected. This contradicts the fact that $T_2$ is a spanning tree of $G_2$.  

\end{proof}

It remains to prove Theorem~\ref{main} for ribbon graphs of type B, which is Proposition~\ref{B}. We need the following lemma. 

\begin{Lemma}\label{l5}
Let $G$ be a graph of type B and $(H, G_1, G_2)$ be as in Definition~\ref{type2b}. Recall that the cyclic ordering of the edges around the vertex $c$ in $G$ is denoted by $(ca_1,cx_1,\cdots,cx_N,ca_n,cy_1,\cdots,cy_M)$ and all the edges $cx_i$'s are in $G_1$. We further assume that $G$ has no loops or multiple edges, then $2\sum_{i=1}^{N}\partial(\overrightarrow{cx_i})\nsim_1 0$.   
\end{Lemma}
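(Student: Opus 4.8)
The plan is to argue by contradiction via Lemma~\ref{l3}, but first to sidestep a tempting dead end. One might hope to realize $2\sum_{i=1}^{N}\partial\overrightarrow{cx_i}$ as the boundary of an acyclic partial orientation and apply Lemma~\ref{l3} directly; this fails in general, because some $x_i$ can be a leaf of $G_1$ (a pendant vertex attached only to $c$), and then no partial orientation can have in-degree minus out-degree equal to $2$ there. Instead I would replace this divisor, up to $\sim_1$, by the boundary of a partial orientation supported entirely at $c$. Since $G$ is of type B we have $a_1,a_n\notin V(G_1)$, so the edges $ca_1,ca_n$ do not lie in $G_1$ and every edge of $G_1$ incident to $c$ is one of the $cx_i$ or one of the $cy_j$. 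Firing $c$ inside $G_1$ then gives the relation $\sum_{i}\partial\overrightarrow{cx_i}+\sum_{cy_j\in E(G_1)}\partial\overrightarrow{cy_j}\sim_1 0$.

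Next I would introduce the partial orientation $B$ of $G_1$ that orients every edge of $G_1$ incident to $c$, sending each $cx_i$ away from $c$ and each $cy_j$ lying in $G_1$ toward $c$. Then $\sum_{\overrightarrow e\in B}\partial\overrightarrow e=\sum_i\partial\overrightarrow{cx_i}-\sum_{cy_j\in E(G_1)}\partial\overrightarrow{cy_j}$, and combining this with the firing relation yields $\sum_{\overrightarrow e\in B}\partial\overrightarrow e\sim_1 2\sum_i\partial\overrightarrow{cx_i}$. The orientation $B$ is acyclic: all its edges are incident to $c$ and, as $G$ has no multiple edges, every vertex other than $c$ meets at most one edge of $B$, so no directed cycle can occur. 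Assuming for contradiction that $2\sum_i\partial\overrightarrow{cx_i}\sim_1 0$, we obtain $\sum_{\overrightarrow e\in B}\partial\overrightarrow e\sim_1 0$, and Lemma~\ref{l3} forces $B$ to be a disjoint union of directed cuts of $G_1$.

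The heart of the argument, which I expect to be the main obstacle, is to show that this $B$ is \emph{not} a disjoint union of directed cuts, thereby producing the contradiction. Here I would invoke the $f$-cycle $\{cf_1,f_1f_2,\dots,f_kc\}$, which lies in $G_1$ by the Remark following Definition~\ref{type2b}. Because $G$ has no multiple edges we have $k\ge 2$, so $cf_1\ne cf_k$, and $f_1,\dots,f_k$ all belong to a single connected component $K$ of $G_1\setminus\{c\}$. In $B$ the edge $cf_k$ is oriented away from $c$ (it is an $x$-edge, since $f_k\in\{x_1,\dots,x_N\}$), whereas $cf_1$ is oriented toward $c$ (it is a $y$-edge, since $f_1\in\{y_1,\dots,y_M\}$). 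Now any directed cut occurring in the decomposition of $B$ is a bond of $G_1$ all of whose edges lie in $B$ and are oriented the same way; since all of them are incident to $c$, the side of the bond not containing $c$ must be a union of whole components of $G_1\setminus\{c\}$, and the bond then consists of exactly the edges joining $c$ to that union. The bond containing $cf_k$ must therefore contain every $c$-edge meeting $K$, in particular $cf_1$, and must orient all of them away from $c$; this contradicts $cf_1$ being oriented toward $c$.

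Thus the obstruction is genuinely the interlacing recorded by the type II structure: the component $K$ carrying the $f$-cycle is attached to $c$ by both an outgoing $x$-edge and an incoming $y$-edge, which is precisely what forbids a cut decomposition. I would flag the third paragraph as the delicate point, since it requires pinning down the exact meaning of ``disjoint union of directed cuts'' in Lemma~\ref{l3} (each piece being a full bond, with all of its edges present in $B$ and consistently oriented) and checking that bonds contained in a $c$-supported orientation are forced to respect the components of $G_1\setminus\{c\}$. The rest is the routine boundary bookkeeping indicated above.
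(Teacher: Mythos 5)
Your proposal is correct and takes essentially the same route as the paper: the identical partial orientation $B=\{\overrightarrow{cx_i}\}_{1\le i\le N}\cup\{\overrightarrow{y_jc}: cy_j\in E(G_1)\}$ obtained by firing $c$, acyclicity of $B$ from the absence of loops and multiple edges, Lemma~\ref{l3}, and a contradiction from the cycle $\{cf_1,f_1f_2,\cdots,f_kc\}$, whose edges $\overrightarrow{cf_k}$ and $\overrightarrow{f_1c}$ cross any cut at $c$ in opposite directions. The only (cosmetic) differences are that you derive the contradiction by analyzing cuts supported at $c$ via the components of $G_1\setminus\{c\}$, where the paper instead uses that a cut meets a cycle in at least two edges to force $\overrightarrow{f_1c}$ into the same directed cut as $\overrightarrow{cf_k}$, and that your explicit restriction to those $cy_j$ lying in $E(G_1)$ tidies a small imprecision in the paper's description of $B$.
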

\begin{proof}
All the following arguments and calculations are made with respect to $G_1$. 

Note that $2\sum_{i=1}^{N}\partial(\overrightarrow{cx_i})\sim_1 \sum_{\overrightarrow{e}\in B}\partial \overrightarrow{e}$, where $B$ is the partial orientation $\{\overrightarrow{cx_i}:i\in [N]\}\cup\{\overrightarrow{y_jc}:cy_j\in E(G_1), j\in [M]\}$. 
Assume by contradiction that $\sum_{\overrightarrow{e}\in B}\partial \overrightarrow{e}\sim_1 0$. Because $G_1$ has no loops or multiple edges, $B$ contains no directed cycle. By Lemma~\ref{l3}, $B$ is a disjoint union of directed cuts of $G_1$. 

In particular, $\overrightarrow{cf_k}$ belongs to a directed cut $\overrightarrow{C}$ of $G_1$, which is a subset of $B$. Because the intersection of a cut and a cycle contains at least two edges, so there is another edge in the cycle $\{cf_k,f_kf_{k-1},\cdots,f_2f_1,f_1c\}$ distinct from $cf_k$ in the cut $\overrightarrow{C} \subseteq B$. The only candidate is $f_1c$ and hence $\overrightarrow{f_1c}\in\overrightarrow{C}$. However, $\overrightarrow{f_1c}$ and $\overrightarrow{cf_k}$ cannot be in one directed cut. We reach a contradiction.

\end{proof}

Now we are ready to prove the following proposition. The proof is technical, so we give an example (Example~\ref{ex2}) after the proof. 

\begin{Prop}\label{B}
If a non-planar ribbon graph $G$ without loops or multiple edges is of type B, then the Bernardi and rotor-routing torsors $b_v$ and $r_v$ do not agree for some vertex $v$ of $G$.
\end{Prop}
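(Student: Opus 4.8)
The plan is to mirror the proof of Proposition~\ref{A}, exhibiting a single spanning tree that witnesses $b_{a_n}\neq r_{a_n}$. Adopt the notation of Definition~\ref{type2b}, so that around $c$ the cyclic ordering is $(ca_1,cx_1,\dots,cx_N,ca_n,cy_1,\dots,cy_M)$, the edges $cx_i$ and the whole $f$-cycle lie in $G_1$, and the $a$-cycle lies in $G_2$. I would take $T$ to be a spanning tree of $G$ containing every edge of $H$ except $ca_n$ and $cf_k$ (extended arbitrarily over $G_1\backslash H$ and $G_2\backslash H$), put the sink at $v=a_n$ and a chip at $c$, and set $T'=((c)-(a_n))_{a_n}(T)$. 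By Lemma~\ref{l1} it then suffices to prove $\beta_{(a_n,a_nc)}(T')-\beta_{(a_n,a_nc)}(T)\nsim (c)-(a_n)$.

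First I would analyze the rotor-routing. Orienting $T$ towards $a_n$ makes the rotor at $c$ point along $ca_1$. As the rotor at $c$ advances through the consecutive block $cx_1,\dots,cx_N$, each step sends the chip into $G_1$; since $G$ is of type B, none of $a_1,\dots,a_n$—in particular the sink—lies in $G_1$, so every such excursion returns the chip to $c$ without terminating. Hence the rotor at $c$ advances exactly from $ca_1$ to $ca_n$, at which point the chip reaches the sink. Writing $T=T_1\vee_c T_2$ and $T'=T_1'\vee_c T_2'$, this shows $T_2'=T_2\cup\{ca_n\}\backslash\{ca_1\}$, exactly as in Proposition~\ref{A}, while $T_1'$ is some spanning tree of $G_1$.

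Next I would pass to the decomposition $G=G_1\vee_c G_2$. By Lemma~\ref{l4} and Remark~\ref{remark}, since $(c)-(a_n)\in\text{Div}^0(G_2)$, it is enough to show that the $G_1$-component of $\beta_{(a_n,a_nc)}(T')-\beta_{(a_n,a_nc)}(T)-((c)-(a_n))$ is not $\sim_1 0$. To compute it I would compare the two Bernardi tours and regroup them into shared parts as in Proposition~\ref{A}, now recording only the chips deposited on vertices of $G_1$. The claim to establish is
\begin{equation*}
\beta_{(a_n,a_nc)}(T')-\beta_{(a_n,a_nc)}(T)\sim (c)-(a_n)+2\sum_{i=1}^N\partial\overrightarrow{cx_i},
\end{equation*}
the factor $2$ reflecting that the two cycles of the type II subgraph are interleaved in the cyclic order at $c$: each tour enters and leaves the $G_1$-part in such a way that every edge $cx_i$ is crossed twice with the same sign. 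Granting this, the $G_1$-component equals $2\sum_{i=1}^N\partial\overrightarrow{cx_i}$, which is $\nsim_1 0$ by Lemma~\ref{l5}, and Remark~\ref{remark} yields $\beta_{(a_n,a_nc)}(T')-\beta_{(a_n,a_nc)}(T)\nsim(c)-(a_n)$, proving $b_{a_n}\neq r_{a_n}$.

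The main obstacle is this third step. Unlike in Proposition~\ref{A}, where the relevant difference was literally the divisor of a partial orientation and Lemma~\ref{l3} applied on the nose, here the $G_1$-contribution is only \emph{linearly equivalent} to $2\sum\partial\overrightarrow{cx_i}$, and producing this equivalence requires careful tour bookkeeping to track the interleaved excursions into $G_1$ and to account for the doubling. This is exactly what Lemma~\ref{l5} is designed to absorb, its own proof feeding the partial orientation $\{\overrightarrow{cx_i}\}\cup\{\overrightarrow{y_lc}\}$ into Lemma~\ref{l3} and using the $f$-cycle in $G_1$ to rule out a directed cut. As a sanity check I would verify the minimal case of two triangles glued at $c$ with interleaved ribbon structure, where $T_2'=T_2\cup\{ca_n\}\backslash\{ca_1\}$ and the displayed equivalence both hold.
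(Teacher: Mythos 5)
Your setup and rotor-routing analysis agree with the paper's: same tree choice in spirit, sink at $a_n$, chip at $c$, the excursion argument giving $T_2'=T_2\cup\{ca_n\}\backslash\{ca_1\}$, and the reduction via Lemma~\ref{l4} and Remark~\ref{remark} to the $G_1$-component. The genuine gap is your unconditional claim that $\beta_{(a_n,a_nc)}(T')-\beta_{(a_n,a_nc)}(T)\sim (c)-(a_n)+2\sum_{i=1}^{N}\partial\overrightarrow{cx_i}$. Only \emph{one} copy of $\sum_{i=1}^{N}\partial\overrightarrow{cx_i}$ comes from tour bookkeeping, via Lemma~\ref{l2}: the tour of $T'$ enters $G_1$ through $cy_1$ while the tour of $T$ enters through $cx_1$. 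The second copy is not bookkeeping at all. Writing $T_1^{(1)}=T_1$ and $T_1^{(i+1)}=((x_i)-(c))_c(T_1^{(i)})$ for the trees produced inside $G_1$ by the chip's $N$ excursions (so $T_1'=T_1^{(N+1)}$), the remaining contribution is $\beta^1_{(c,cx_1)}(T_1')-\beta^1_{(c,cx_1)}(T_1)$, and your claim requires $\beta^1_{(c,cx_1)}(T_1^{(i+1)})-\beta^1_{(c,cx_1)}(T_1^{(i)})\sim_1 (x_i)-(c)$ for every $i$ --- that is, that the Bernardi bijection of $G_1$ \emph{tracks} rotor-routing at the basepoint $c$. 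This is an instance of the very property under investigation and can fail when $G_1$ is itself non-planar; no comparison of the two tours will produce it, because $T_1$ and $T_1'$ are genuinely different spanning trees of $G_1$ whose Bernardi tours are not related edge-by-edge (contrast Proposition~\ref{A}, where the two tours literally shared all but boundary pieces). Your attribution of this step to Lemma~\ref{l5} is a misreading of its role: that lemma only shows $2\sum_{i=1}^{N}\partial\overrightarrow{cx_i}\nsim_1 0$ once the equivalence is established, and your sanity check (two interleaved triangles) has $G_1$ planar, where tracking holds by Baker--Wang, so it cannot detect the problem.

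The paper resolves this with a dichotomy that your single-witness plan misses. Case 1: tracking holds for all $i\in[N]$; then your displayed equivalence is valid, Lemma~\ref{l5} applies, and $b_{a_n}\neq r_{a_n}$. Case 2: tracking fails for some $i$; then the spanning tree $T_1^{(i)}\vee_c T_2$ of $G$, with chip at $x_i$ and sink at $c$, satisfies $((x_i)-(c))_c(T_1^{(i)}\vee_c T_2)=T_1^{(i+1)}\vee_c T_2$ (the chip cannot enter $G_2$ since it would have to pass through the sink $c$), and the failed equivalence together with Lemma~\ref{l4} and Lemma~\ref{l1} gives $b_c\neq r_c$. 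So the correct conclusion is disagreement at some $v\in\{a_n,c\}$, and your stronger claim that $v=a_n$ always suffices is unsupported. To repair your proof, replace the unconditional equivalence by this case analysis.
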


\begin{proof}
Let $G$ and $(H,G_1,G_2)$ be as in Definition~\ref{type2b}. Recall that the cyclic ordering of the edges around $c$ in $G$ is denoted by $(ca_1,cx_1,\cdots,cx_N,ca_n,cy_1,\cdots,cy_M)$. Because $G$ is of type B, $G_2$ contains the cycle $\{ca_1, a_1a_2, \cdots, a_nc\}$ and $G_1$ contains all the edge $cx_i$'s. We want to prove that $b_{a_n}\neq r_{a_n}$ or $b_{c}\neq r_{c}$.

Let $T$ be a spanning tree of $G$ that contains $\{ca_1, a_1a_2, \cdots, a_{n-1}a_n\}$ (see Figure~\ref{fig8-1}). Let $a_n$ be the sink and put the chip at $c$. Then by applying the rotor-routing process, we get another spanning tree $T'=((c)-(a_n))_{a_n}(T)$. 

Let $T=T_1 \vee_c T_2$, where $T_i$ is obtained by restricting $T$ to $G_i$ ($i=1,2$).  Similarly, let  $T'=T'_1 \vee_c T'_2$, where $T'_i$ is obtained by restricting $T'$ to $G_i$ ($i=1,2$).

For the same reason as in the proof of Proposition~\ref{A}, we have $T_2'=T_2\cup\{ca_n\}\backslash\{ca_1\}$.

\begin{figure}[h]
            \centering
            \includegraphics[scale=0.6]{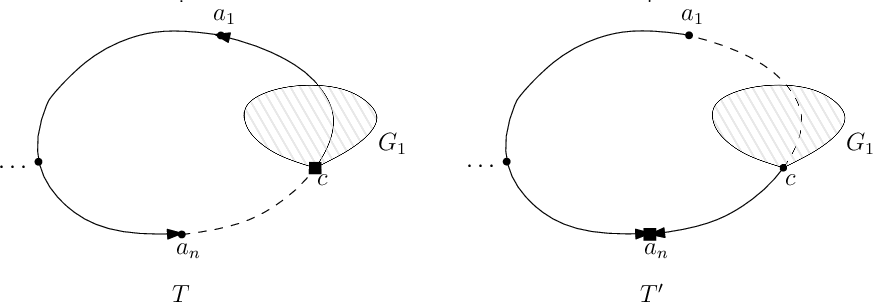}
            \caption{The rotor-routing process for $T'=((c)-(a_n))_{a_n}(T)$. An edge is dashed if and only if it is not in the tree. The oriented edges indicate the rotor configuration and the square indicates the chip. We only show the rotor configuration for the edges in the cycle $\{ca_1, a_1a_2, \cdots, a_nc\}$.}
            \label{fig8-1}
\end{figure}

\begin{figure}[h]
            \centering
            \includegraphics[scale=0.6]{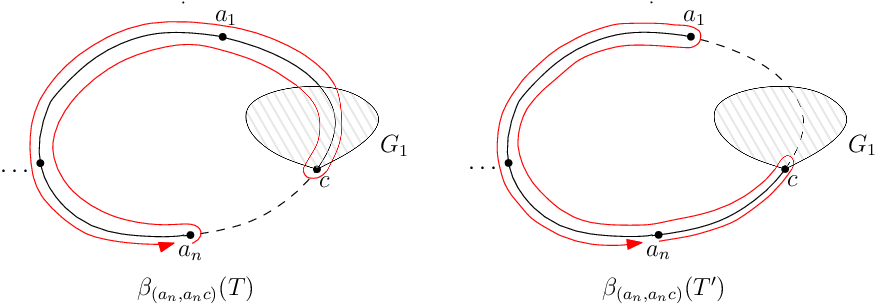}
            \caption{The Bernardi tours of $T$ and $T'$. The key information is that the tour of $T$ ($T'$) enters $G_1$ via the edge $cx_1$ ($cy_1$).}
            \label{fig8-2}
\end{figure}

By applying the Bernardi process (see Figure~\ref{fig8-2}), we have $$\beta_{(a_n,a_nc)}(T')-\beta_{(a_n,a_nc)}(T)=\beta^2_{(a_n,a_nc)}(T'_2)-\beta^2_{(a_n,a_nc)}(T_2)+\beta^1_{(c,cy_1)}(T'_1)-\beta^1_{(c,cx_1)}(T_1),$$
because the Bernardi process for $G_1$ and the one for $G_2$ are independent.


We focus on $G_1$. We rewrite the above identity as
$$\beta_{(a_n,a_nc)}(T')-\beta_{(a_n,a_nc)}(T)=(c)-(a_n)+D+\beta^1_{(c,cy_1)}(T'_1)-\beta^1_{(c,cx_1)}(T_1),$$
where $D=\beta^2_{(a_n,a_nc)}(T'_2)-\beta^2_{(a_n,a_nc)}(T_2)-(c)+(a_n)\in\text{Div}^0(G_2)$.

By Lemma~\ref{l1} and Lemma~\ref{l4}, if $\beta^1_{(c,cy_1)}(T'_1)-\beta^1_{(c,cx_1)}(T_1)\nsim_1 0$ in $G_1$, then $b_{a_n}\neq r_{a_n}$.

Now we begin to compute $\beta^1_{(c,cy_1)}(T'_1)-\beta^1_{(c,cx_1)}(T_1)$. By Lemma~\ref{l2}, 
$$\beta^1_{(c,cy_1)}(T'_1)-\beta^1_{(c,cx_1)}(T'_1)\sim_1\sum_{i=1}^{N}\partial(\overrightarrow{cx_i}),$$
so we have
\begin{equation}\label{eq1}
    \beta^1_{(c,cy_1)}(T'_1)-\beta^1_{(c,cx_1)}(T_1)\sim_1 \beta^1_{(c,cx_1)}(T'_1)-\beta^1_{(c,cx_1)}(T_1)+\sum_{i=1}^{N}\partial(\overrightarrow{cx_i}).
\end{equation}

In order to calculate $\beta^1_{(c,cx_1)}(T'_1)-\beta^1_{(c,cx_1)}(T_1)$ in the above formula, we need to elaborate how $T_1$ becomes $T'_1$ in the rotor-routing process for $T'=((c)-(a_n))_{a_n}(T)$. Initially, the chip is at the vertex $c$ and the rotor configuration is $T_1\cup T_2$. Then the chip goes to the vertex $x_1$ and the rotor configuration $T_1\cup T_2$ becomes $T_1\cup\{cx_1\}\cup T_2\backslash\{ca_1\}$. The key observation here is that, when the chip goes back to $c$, the tree $T_1$ in the rotor configuration becomes the tree $((x_1)-(c))_c(T_1)$, where the rotor-routing action is applied with respect to $G_1$, and hence the rotor configuration becomes $((x_1)-(c))_c(T_1)\cup\{cx_1\}\cup T_2\backslash\{ca_1\}$. For the next step, the chip goes to $x_2$ and the rotor configuration becomes $((x_1)-(c))_c(T_1)\cup\{cx_2\}\cup T_2\backslash\{ca_1\}$. In general, we denote $T_1^{(1)}:=T_1$ and $T_1^{(i+1)}:=((x_i)-(c))_c(T_1^{(i)})$ for $i\in [N]$, and hence get Table~\ref{table}. Note that $T'_1=T_1^{(N+1)}$.

\begin{table}[h]
\begin{center}
\begin{tabular}{|c|c|c|}
\hline
Position of the Chip  & The Rotor Configuration & Remark \\ \hline 
$c$  & $T_1\cup T_2$ &  \\ \hline
$x_1$  & $T_1^{(1)}\cup\{cx_1\}\cup T_2\backslash\{ca_1\}$ & $T_1^{(1)}=T_1$ \\ \hline
$c$  & $T_1^{(2)}\cup\{cx_1\}\cup T_2\backslash\{ca_1\}$ &  $T_1^{(2)}=((x_1)-(c))_c(T_1^{(1)})$\\ \hline
$x_2$  & $T_1^{(2)}\cup\{cx_2\}\cup T_2\backslash\{ca_1\}$ &  \\ \hline
$\cdots$  & $\cdots$ &  $T_1^{(i+1)}=((x_i)-(c))_c(T_1^{(i)})$\\ \hline
$x_N$  &$T_1^{(N)}\cup\{cx_N\}\cup T_2\backslash\{ca_1\}$ &  \\ \hline
$c$  & $T_1^{(N+1)}\cup\{cx_N\}\cup T_2\backslash\{ca_1\}$ & $T_1^{(N+1)}=((x_N)-(c))_c(T_1^{(N)})$ \\ \hline
$a_n$  & $T'_1\cup T'_2$ & $T'_1=T_1^{(N+1)}$ \\ \hline
\end{tabular}
\end{center}
\caption{The rotor-routing process for $T'=((c)-(a_n))_{a_n}(T)$. The rotor-routing actions in the last column are applied with respect to $G_1$.}
\label{table}
\end{table}
For the final part of the proof, we discuss two cases.

\item Case 1: In $G_1$, $\beta^1_{(c,cx_1)}(T_1^{(i+1)})-\beta^1_{(c,cx_1)}(T_1^{(i)})\sim_1 (x_i)-(c)$ for all $i\in [N]$. 

In this case, 
$$\beta^1_{(c,cx_1)}(T'_1)-\beta^1_{(c,cx_1)}(T_1)=\sum_{i=1}^N(\beta^1_{(c,cx_1)}(T_1^{(i+1)})-\beta^1_{(c,cx_1)}(T_1^{(i)}))\sim_1 \sum_{i=1}^{N}\partial(\overrightarrow{cx_i}).$$
Together with \eqref{eq1}, we get 
\begin{equation}\label{eq2}
\beta^1_{(c,cy_1)}(T'_1)-\beta^1_{(c,cx_1)}(T_1)\sim_1 2\sum_{i=1}^{N}\partial(\overrightarrow{cx_i}).
\end{equation}
Then we apply Lemma~\ref{l5} and conclude that  $b_{a_n}\neq r_{a_n}$.

\item Case 2:  In $G_1$, $\beta^1_{(c,cx_1)}(T_1^{(i+1)})-\beta^1_{(c,cx_1)}(T_1^{(i)})\nsim_1 (x_i)-(c)$ for some $i\in [N]$. 

In this case, we will show that $b_{c}\neq r_{c}$. Consider the spanning tree $T_i^{(i)}\vee_c T_2$ of $G$, let $c$ be the sink, and put the chip at $x_i$. (Here $T_2$ can be replaced by any spanning tree of 
$G_2$.)

By applying the rotor-routing process, we have
$$((x_i)-(c))_c(T_1^{(i)}\vee_c T_2)=T_1^{(i+1)}\vee_c T_2.$$

Running the Bernardi process, we get
$$\beta_{(c,cx_1)}(T_1^{(i+1)}\vee_c T_2)=\beta^1_{(c,cx_1)}(T_1^{(i+1)})+\beta^2_{(c,ca_n)}(T_2),$$
$$\beta_{(c,cx_1)}(T_1^{(i)}\vee_c T_2)=\beta^1_{(c,cx_1)}(T_1^{(i)})+\beta^2_{(c,ca_n)}(T_2),$$
and hence
$$\beta_{(c,cx_1)}(T_1^{(i+1)}\vee_c T_2)-\beta_{(c,cx_1)}(T_1^{(i)}\vee_c T_2)=\beta^1_{(c,cx_1)}(T_1^{(i+1)})-\beta^1_{(c,cx_1)}(T_1^{(i)})\nsim (x_i)-(c),$$
where the last step is due to the assumption of Case 2 and Lemma~\ref{l4}.

By Lemma~\ref{l1}, this implies that $b_{c}\neq r_{c}$.
\end{proof}

\begin{Ex}\label{ex2}
Here we give an example to demonstrate how the proof of Proposition~\ref{B} works. In Figure~\ref{fig9-1} and Figure~\ref{fig9-2}, the graph $G$ consists of 6 vertices and 6 edges and it is of type B. Let the spanning tree $T$ be $\{ca_1,a_1a_2,cf_1,cf_2\}$. Then $T_1=\{cf_1,cf_2\},T_2=\{ca_1,a_1a_2\}$. Focusing on $G_2$, one can see that $T'_2$ is $\{ca_2,a_1,a_2\}$, whatever the tree $T_1$ is. Focusing on $G_1$, one can get $T'_1=T_1^{(2)}=((f_2)-(c))_c(T_1^{(1)})=((f_2)-(c))_c(T_1)$, which is exactly Step 1 to Step 4 by ignoring $\overrightarrow{cf_2}$. Because $G_1$ is planar, the two torsors agree at $c$ and hence the example belongs to Case 1 in the final part of the proof. A direct calculation shows that $\beta_{(a_1,a_2c)}(T')-\beta_{(a_1,a_2c)}(T)=((c)-(a_2))+((c)-(f_2))$. The first summand $(c)-(a_2)$ is contributed by $G_2$ and the second summand $(c)-(f_2)$ is contributed by $G_1$. By \eqref{eq2}, $(c)-(f_2)$ should be linearly equivalent to $2\partial(\overrightarrow{cf_2})$, which is true because one check that $3[(c)-(f_2)]=0$ in $\text{Pic}^0(G_1)$. Because $(c)-(f_2)\nsim 0$, the two torsors disagree at the vertex $a_2$.
\end{Ex}

\begin{figure}[h]
            \centering
            \includegraphics[scale=0.6]{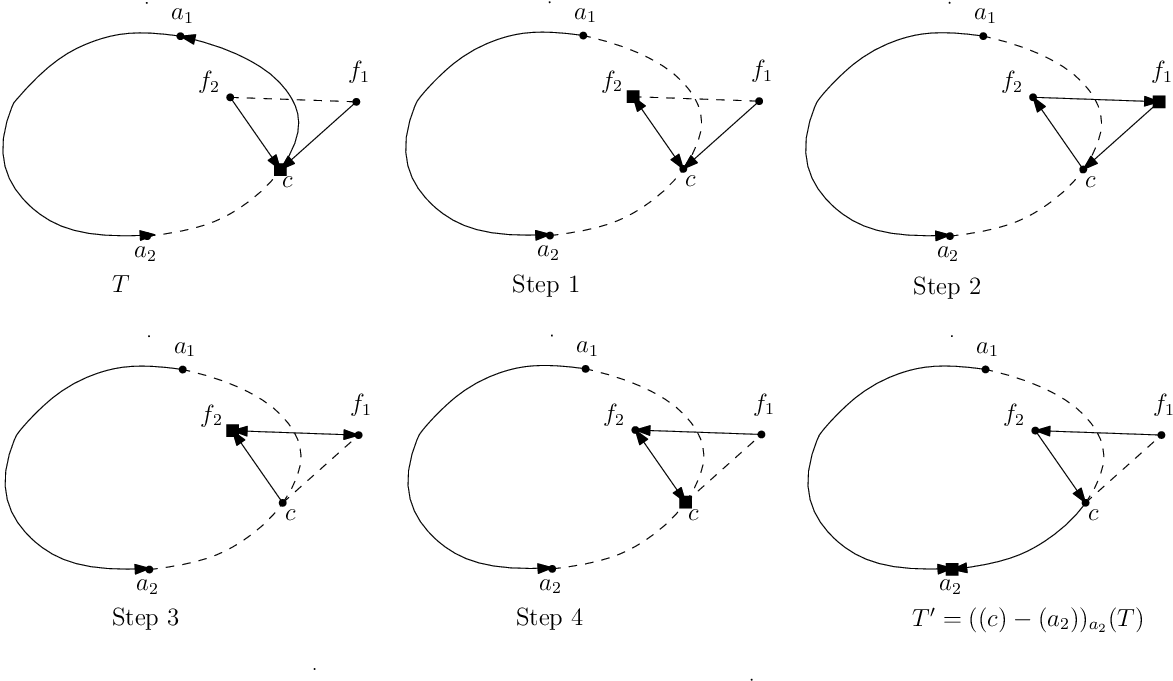}
            \caption{The rotor-routing process for $T'=((c)-(a_2))_{a_2}(T)$ in Example~\ref{ex2}. The oriented edges indicate the rotor configuration and the square indicates the chip.}
            \label{fig9-1}
\end{figure}

\begin{figure}[h]
            \centering
            \includegraphics[scale=0.6]{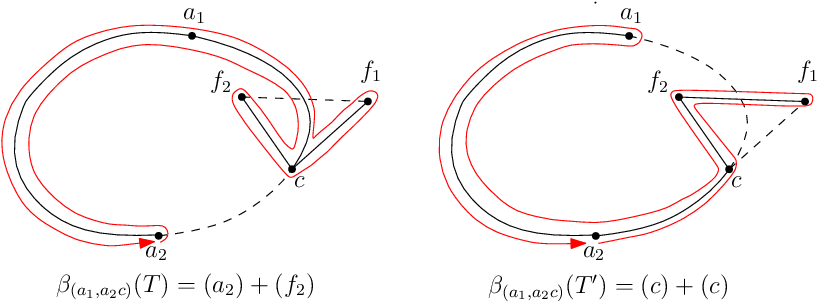}
            \caption{The Bernardi tours of $T$ and $T'$ in Example~\ref{ex2}.}
            \label{fig9-2}
\end{figure}

The main Theorem~\ref{main} is a direct consequence of Corollary~\ref{AB}, Proposition~\ref{A}, and Proposition~\ref{B}.

\begin{Rem}
In Theorem~\ref{main}, one cannot remove the assumption that $G$ has no loops or multiple edges. Indeed, loops are invisible to both the rotor-routing torsor and the Bernardi torsor. However, one can make a planar ribbon graph non-planar by adding loops to it, so we must assume $G$ is loopless in the theorem. Let $G$ be the graph of type II with $k=n=1$ (See Figure~\ref{fig2}). One can check that the two torsors agree at any vertex of $G$, although $G$ is non-planar. So we must also assume $G$ has no multiple edges in the theorem. In our proof, the first place where we need this assumption is Lemma~\ref{l5}. 

\end{Rem}

\section*{Acknowledgement}

Many thanks to Olivier Bernardi for orienting the author towards the study of this conjecture and countless helpful discussions, as well as detailed advice on the writing of this paper. As a graduate student, the author wants to thank the Department of Mathematics at Brandeis University for admitting the author to the Ph.D. program and creating a flexible study environment. The author also thanks the anonymous referee for the helpful feedback.

\end{document}